\documentclass{article}

\usepackage{amsmath, amsthm, amsfonts}

\newtheorem{thm}{Theorem}[section]
\newtheorem{corollary}[thm]{Corollary}
\newtheorem{lem}[thm]{Lemma}

\theoremstyle{definition}
\newtheorem{definition}[thm]{Definition}
\theoremstyle{remark}
\newtheorem{rem}[thm]{Remark}
\theoremstyle{axiom}
\newtheorem{axiom}[thm]{axiom}




\title{On zero dimensional sequential spaces}

\author{Paul Fabel\\
  \small Department of Mathematics \& Statistics\\
  \small Mississippi State, MS,39762\\
  \small apf3@msstate.edu
}
\date{}

\begin{document}

\maketitle

\abstract{We develop tools to recognize sequential spaces with large inductive
  dimension zero. We show the Hawaiian earring group $G$ is 0 dimensional,
  when endowed with the quotient topology, inherited from the space of based
  loops with the compact open topology. In particular $G$ is $T_4$ and hence
  inclusion $G \hookrightarrow F_M (G)$ is a topological embedding into the
  free topological group $F_M (G)$ in the sense of Markov.}

\renewcommand{\abstractname}{Acknowledgements}
\begin{abstract}
 The author gratefully acknowledges partial support from University
Primorska, Feb 22-August 1 2020
\end{abstract}



\section{Introduction}

When is a Hausdorff sequential space zero dimensional? The fundamental group
of the Hawaiian earring serves as catalyst for such an inquiry in the context
of the following three questions. We answer the first question affirmatively
via partial answers to second and third.

1) Is the large inductive dimension of the Hawiian earring group zero, if
$G$ enjoys the quotient topology inherited from the space of based loops?

2) If a sequential space $G$ continuously injects into a countable inverse of
limit of discrete spaces, what conditions ensure $G$ is zero dimensional?

3) If a sequential space $G$ is a quotient of a countable product of discrete
spaces, what conditions ensure $G$ is zero dimensional?

Wild algebraic topology is loosely described as the study of locally complicated
spaces, and their attendant homotopy/homology groups. The motive to impose a
topology on the latter objects might come from functorality of the fundamental
group {\cite{B}}, from a canonical bijection between $\pi_1 (X, p)$ and the
fibres of a semicovering $E \rightarrow X$
{\cite{FZ2}}{\cite{VZ2}}{\cite{VZ3}}, or to measure the extent to which
$\pi_1$ might act continuously on a space {\cite{B1}}.

At center stage
{\cite{E0}}{\cite{CC1}}{\cite{E1}}{\cite{Cors1}}{\cite{CE}}{\cite{CK1}} is the
Hawaiian earring ${HE}$, a null sequence of loops joined at a common
point, the inverse limit of nested sequence of bouquets on $n$ loops, under
retraction bonding maps, collapsing the ${nth}$ loop to the special point
$p.$

The induced homomorphism $\phi : \pi_1 ({HE}, p) \rightarrow
\lim_{\leftarrow} F_n$ , with $F_n$ the discrete free group on $n$ generators,
is one to one {\cite{DeSmit}}{\cite{MM}}{\cite{FZ}}. Thus the subgroup
${im} (\phi)$ determines a sense in which elements of $\pi_1 ({HE},
p)$ can be understood as precisely the ``infinite irreducible words'' in the
letters $x_1, x_1^{-1}, x_2 \ldots .$ so that each letter appears finitely
many times {\cite{CC1}}.

To impose a topology on $\pi_1 ({HE}, p)$, at one extreme we might insist
that $\phi$ is a topological embedding. This creates a zero dimensional
topological group, but with the drawback that the topology is permissive for
what is allowed to converge. For example the sequence $(x_1 x_n x_1^{- 1}
x_n)^n \rightarrow 0 \in \pi_1 ({HE}, p),$but all corresponding lifts
diverge, with the topology of uniform convergence.

At another extreme, invoking a construction similar to the familiar universal
cover, {\cite{Bog}}{\cite{FZ}}, the group $\pi_1 ({HE}, p)$ acts freely
and isometrically on a corresponding generalized universal cover of
${HE}$, a uniquely arcwise connected, locally path connected metric
space, i.e. a topological R-tree $R$. The trade off here is that it is
difficult for our isometries to converge. Treated as a group of isometries of
$R$, $\pi_1 ({HE}, p)$ fails to be even a quasitopological group, we can
have $x_n \rightarrow {id}$ with $\{ x_n x_1 \}$ diverging.

One compromise is to impose the quotient topology on $\pi_1 ({HE}, p),$
defined as a quotient of the space of based loops in ${HE}$ with the
uniform topology. This resolves two of the mentioned drawbacks, the sequence
$\{ (x_1 x_n x_1^{- 1} x_n)^n \}$ now diverges, and group translation is now
continuous. On the other hand this comes at the cost of metrizability. The
subspace $\{ 0, (x_1 x_n x_1^{- 1} x_n)^m \} \subset \pi_1 ({HE}, p)$ is
a Frechet Urysohn fan {\cite{A1}} {\cite{FA1}} and hence $\pi_1 ({HE},
p)$ is not first countable. This begs the question of which familiar
separation axioms does $\pi_1 ({HE}, p)$ satisfy?

Continuity of the injection $\phi : \pi_1 ({HE}, p) \rightarrow
\lim_{\leftarrow} F_n $ensures $\pi_1 ({HE}, p)$ is $T_2$, since the
codomain is $T_2$. Morever $\pi_1 ({HE}, p)$ is a quotient of a separable
metric space and hence $\pi_1 ({HE}, p)$ is Lindelof. Unfortunately
$\phi$ is {{not}} a topologial embedding {\cite{FA0}}, and $\pi_1
({HE}, p)$ is {{not}} a topological group in TOP {\cite{FA1}}. This
calls into question whether $\pi_1 ({HE}, p)$ is at least $T_3$, and
hence $T_4,$ since $\pi_1 ({HE}, p)$ is a Lindelof space. To prove $\pi_1
({HE}, p)$ is $T_4$ it suffices to prove $\pi_1 ({HE}, p)$ has large
inductive dimensions zero, that disjoint closed sets can be thickened into
disjoint clopens.

While the class of contractible space shows dimension is generally not an
invariant of the homotopy type of an underlying space, functorality ensures
$\pi_1 (X)$ and $\pi_1 (Y)$ have the same dimension if $X$ and $Y$ are
homotopy equivalent. More esoterically, the knowledge that $\pi_1 ({HE},
p)$ is zero dimensional will ensure for example, that $\pi_1 ({HE}, p)$
cannot contain a copy of the totally disconnected 1 dimensional Erdos space
{\cite{D1}}.

To prove $\pi_1 ({HE}, p)$ is zero dimensional we establish Theorems
\ref{gzero} and \ref{mainapp}, applicable to suitably well behaved quotients
of the inverse limit of countably many discrete nested retracts.

The potential difficulty of such an inquiry is highlighted by the familiar
dimension raising closed quotient of the Cantor set $\{ 0, 1 \} \times \{ 0, 1
\} \ldots \rightarrow [0, 1]$ mapping each binary sequence onto the
corresponding real number. What goes wrong? All finite approximations to $(0,
1, 1, .)$ and $(1, 0, 0 \ldots)$ are distinct, yet the points are identified
in the limit. This is analogous to a failure of $\pi_1$ injectivity in shape
theory. To avoid this, in this paper we only consider quotients where the
above phenomenon does not happen, and in particular the above does not happen
in the Hawaiian earring.

Most of the paper is devoted to a proof of the following. Suppose $X_1
\subset X_2 \ldots$ is a nested sequence of discrete retracts $X_{n + 1}
\rightarrow X_n $ and $q_n : X_n \rightarrow G_n$ is a quotient map. This data
induces a quotient map $q : \lim_{\leftarrow} X_n \rightarrow G$, and the
following two extra assumptions ultimately ensure that $q$ does not raise
dimension. 1) The retractions $X_{n + 1} \rightarrow X_n$ induce a map $G_{n +
1} \rightarrow G_n$ and 2) The inclusion maps $X_n \rightarrow X_{n + 1}$
induce a map $G_n \rightarrow G_{n + 1}$. Our main result (Theorems
\ref{gzero} and \ref{mainapp}) is that a space $G$ constructed in this manner
has large inductive dimension zero. The proof uses the well ordering
principle, and we now indicate why well ordering is useful to circumvent the
failure of a less sophisticated approach.

The proof idea for Theorems \ref{gzero} and \ref{mainapp} stems from an easier
but still complicated construction designed to prove that the Hawaiian earring
group $\pi_1 ({HE}, p)$ is a $T_3$ space. To prove that $\pi_1
({HE}, p)$ is a $T_3$ space it suffices to prove that $\pi_1 ({HE},
p)$ has a basis of clopen sets, i.e. that $\pi_1 ({HE}, p)$ has small
inductive dimension 0. In turn, since $\pi_1 ({HE}, p)$ is homogeneous,
it suffices, to start with a closed set $B \subset \pi_1 ({HE}, p)$ so
that the identity $e \notin B,$ and thicken $B$ into a clopen set $U (B)$ so
that $B \subset U (B)$ and $e \notin U (B)$. We now outline the overall strategy
for building $U (B)$ and point out a potenially fatal pitfall.

By construction $\pi_1 ({HE}, p) = G$ is equipped with a canonical
countable collection of clopen sets $U (g_n) \subset \pi_1 ({HE}, p)$,
the preimage of $g_n \in G_n$ under the retraction $G \rightarrow G_n,$ here
$G_n$ is the free group on $n$ generators. The overall idea it to somehow
thicken $B \subset \pi_1 ({HE}, p)$ into a union $U (B)$ of our special
clopens, so that $U$ is clopen and $e \notin B$.

Given a closed set $B \subset \pi_1 ({HE}, p)$, the simplest idea to
construct $U (B)$ \ is to exploit the God given retraction $R : G\backslash \{
e \} \rightarrow \left( \bigcup G_n \right) \backslash \{ e \}$. Unfortunately
this does not work, as indicated below, suggesting why more refined methods
for building $U (B)$ are needed.

Consider the retraction $R : G\backslash \{ e \} \rightarrow \left( \bigcup
G_n \right) \backslash \{ e \}$ taking the nontrivial word $g \in G$ to $R (g)
= g_n$ (deleting all letters greater than $n$ from $g$) with $n$ minimal so
that $g_n$ is nontrivial. Given $B \subset G$ closed with $e \notin B$ it is at
least plausible that $\bigcup_{g \in B} U (R (g))$ is clopen, but the
following example shows this is false, $e$ can be a missing limit point.
Suppose for $k > 1$ the closed set $B$ is the sequence of finite words $\{ w
(k) \}$ with \ $w (k) = (x_1 x_{k + 1} x_1^{- 1} x_{k + 1})^k x_k .$ Thus $R
(w (k)) = x_k$ and hence $e$ is a missing limit point of the union of the sets
$U (x_k) .$ \

For the latter example the reader might notice that we could get an
acceptable thickening $U (B)$ using the union of the sets $U (x_1 x_{k + 1}
x_1^{- 1} x_{k + 1})^k x_k),$ i.e. given $b \in B$ we should look for a large
index approximation ${of}$b, rather than a small index approximation of
$b.$ This is indeed the right idea, but comes at the cost of no obvious best
method to approximate $b$, we have too many choices as shown by the example $b
= x_1 x_2 x_3 \ldots . \in \pi_1 ({HE}, p)$. In other words the best way
to approximate $b \in B$ is context dependent, depending both on $B$ and also
previously made choices when attempting to thicken some of $B$.

To make the previous sentence more precise, we indicate a more systematic way
to thicken $B \subset \pi_1 ({HE}, p)$ into a clopen. The overarching
idea is to impose a linear order on $\pi_1 ({HE}, p)$ (not compatible
with its topology, and not well ordered), but so that nevertheless each closed
set $A \subset \pi_1 ({HE}, p)$ has a minimal element. Given such an
ordering, we start with the minimal $b_0 \in B,$ then thicken $b_0 $ into $U
(R (b_0))$, then let $b_1$ the minimal element of the closed set $B\backslash
U (R (b_0)) .$ Then we select $U (R (b_1))$ and so on. Crucially we hope to
ensure the union of the sequence $U (R (b_0)) \bigcup U (R (b_1)) \bigcup
\ldots$is clopen.

If we are suitably careful with the definition of our linear order on $G$,
Lemma \ref{induct} ensures that, proceeding by transfinite recursion, the
union of our selected sets $U (R (b_i))$ is indeed the desired clopen set $U
(B) .$ This idea is the key to the paper, and we hope it will remain
undisguised by the superficially technical appearance of its implementation.

Corollary \ref{he} shows $\pi_1 ({HE}, p)$ is indeed 0 dimensional and in
particular $\pi_1 ({HE}, p)$ is a Tychonoff space (completely regular).
This pays off categorically both in TOPGRP {\cite{AT1}} and the (compactly
generated groups) $k -$GRP {\cite{Po}}.

On the one hand we might ingressively blame the failure of $\pi_1 ({HE},
p) $ to be a topological group in TOP on an abundance of closed sets (Theorem
1 {\cite{Ba0}}). As summarized in section 3 {\cite{B}}, can $\pi_1 ({HE},
p)$ be repaired categorically, by coarsening $\pi_1 ({HE}, p) $to be the
canonical quotient of the free Markov topological group $F_M (\pi_1
({HE}, p))$. The payoff here is that our new knowledge that $\pi_1
({HE}, p)$ is a Tychonoff space \ contributes to our understanding of the
latter construction. We are assured that inclusion $\pi_1 ({HE}, p)
\rightarrow F_M (\pi_1 ({HE}, p))$ is a topological embedding, as noted
in Lemma 3.1 {\cite{B}}. See also {\cite{AT1}}.

More congressively {\cite{ChE}}, instead of coarsening $\pi_1 ({HE}, p),$
we might instead accept $\pi_1 ({HE}, p)$ as a 0-dimensional first class
citizen in $k - {SEQGROUP}$, the category of sequential spaces with group
structure, so that the group operations are sequentially continuous, and
acknowledge that the familiar product topology is categorically not always the
most useful way to multiply spaces {\cite{BR1}} {\cite{BT}}.

Looking ahead the hope is that the tools developed in this paper will prove
useful to answer more general questions such as ``If $X$ is a planar
continuum, with the quotient topology must $\pi_1 (X, p) $ be zero
dimensional?'' We conjecture the answer is ``yes''.

\section{Three axioms ensure $G$ is zero dimensional}

The purpose of this section is to prove Theorem \ref{gzero}, every space $G
\quad$satisfying the three axioms below has large inductive dimension zero.

We do not assume $G$ is the Hawaiian earring group, but the reader may find
it helpful to assume otherwise, to assume $G_n$ is the free group of maximally
reduced words on $n$ generators, and to treat a nontrivial element $g \in G$
as an ``irreducible'' countably infinite word in the letters $x_1, x_1^{- 1},
x_2, \ldots$ so that each letter appears finitely many times, and so that each
subinterval of letters in $g$ represents a nontrivial loop in the Hawaiian
earring.

Constructing a useful linear order on $G$ is a complicated affair carried out
in detail in section \ref{apps}, but the rough idea is the following. Assuming
$G$ is the Hawaiian earring group, to compare $g \in G$ and $h \in G$, first
delete all letters except $x_1$ and $x_1^{- 1},$ but {{don't}} reduce. If
the surviving unreduced words are different this is adequate to tell $g$ and
$h$ apart. 
We can also arrange that homotopy classes of words in $x_1,x_1^-1$ determine non interlaced subsets of $G$. 
Having well ordered
the unreduced words in $ x_1, x_1^- 1,$ we then extend the well
ordering to the unreduced words in $\{ x_1, x_1^{- 1}, x_2, x_2^{- 1} \}$
so as to ensure axioms \ref{lin} and \ref{trap} are destined to hold. The
important point is that to compare $g$ and $h$ we look, successively at their
{{unreduced}} approximations, until we find the minimal index where they
differ as unreduced words. In particular $G$ does {
{not}} have the
lexigraphic order determined by $G_n .$

\

\begin{axiom}
  \label{retracts}We assume $G$ is a sequential space (a space so that if $A
  \subset G$ is not closed, there exsts a convergent sequence $a_n \rightarrow
  x$ so that $a_n \in A$ and $x \notin A$). We assume $G_1 \subset G_2$.... is a
  nested sequence of closed discrete subspaces and for each $n \in \{1, 2,
  3..\}$ the map $\Pi_n : G \rightarrow G_n$ is a retraction. We assume the
  canonical map $\phi : G \rightarrow \Pi G_n$ is one to one, defined as $\phi
  (g) = \{\Pi_n (g)\}$. We we do NOT require that $\phi$ is a topological
  embedding and we do NOT require that $\phi$ is a surjection. We assume
  $\Pi_{n - 1} = \Pi_{n - 1} \Pi_n$ and we assume the sequence $\Pi_n (g)
  \rightarrow g$ pointwise for each $g \in G$.
\end{axiom}

\begin{axiom}
  \label{lin}We assume $G$ admits a linear order $<$ so that every closed set
  $B \subset G$ has a minimal element, so that $\Pi_1 (g) \leq \Pi_2 (g) ....
  \leq g$, so that each subspace $G_n$ is well ordered, and for each strictly
  increasing sequence $g_1 < g_2$.... in $G$, either every subsequence of
  $\{g_n \}$ diverges, or $\lim g_n = \sup \{g_n \}$. (We do NOT require that
  $G, <$ has the order topology, and we do NOT assume that the discrete
  subspace $G_n$ has the order topology).
\end{axiom}

\begin{axiom}
  \label{trap}Define $G_{\infty} = G_1 \cup G_2, \ldots$ and given $k \in
  G_{\infty}$ obtain $N$ minimal so that $k \in G_N$ and define ${Blowup}
  (k) = \Pi_N^{- 1} \Pi_N (k)$. We assume if $k_1 < k_2 < k_3$ with each $k_n
  \in G_{\infty}$ then if ${Blowup} (k_3) \subset {Blowup} (k_1)$
  then ${Blowup} (k_2) \subset {Blowup} (k_1)$.
\end{axiom}

\subsection{Basic Lemmas}

\begin{rem}
  \label{gt2}Since each space $G_n$ is discrete the countable product $\Pi_{n
  = 1}^{\infty} G_n$ is $T_2$. Hence, $G$ is also $T_2$ since $\phi :
  G_{\infty} \rightarrow lim_{\leftarrow} G_n$ is continuous and one to one
  (altough typically NOT a topological embedding). In particular convergent
  sequences in $G$ have unique limits.
\end{rem}

\begin{lem}
  \label{basicgood}The restriction $\phi$\textbar$G_{\infty}$ maps
  $G_{\infty}$ bijectively onto the eventually constant sequences in
  $\lim_{\leftarrow} G_n .$ (In general $\phi$\textbar$G_{\infty}$is NOT a
  topologicl embedding.)
\end{lem}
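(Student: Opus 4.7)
The plan is to verify the bijection by unpacking Axiom \ref{retracts} directly; no deeper machinery (not even the linear order of Axiom \ref{lin}) is needed here.

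First I would check that $\phi$ does send $G_\infty$ into the eventually constant sequences of $\lim_\leftarrow G_n$. Take $g \in G_\infty$ and let $N$ be minimal with $g \in G_N$. Since the $G_n$ are nested, $g \in G_n$ for every $n \ge N$, and since $\Pi_n : G \to G_n$ is a retraction, $\Pi_n(g) = g$ for every $n \ge N$. Thus $\phi(g) = (\Pi_1(g), \Pi_2(g), \ldots)$ stabilizes at $g$ from index $N$ onward. The sequence lies in the inverse limit because the compatibility relation $\Pi_{n-1} = \Pi_{n-1}\Pi_n$ assumed in Axiom \ref{retracts} restricts on $G_n$ to the bonding map $G_n \to G_{n-1}$.

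Next I would prove surjectivity onto eventually constant sequences. Suppose $(h_n) \in \lim_\leftarrow G_n$ is eventually constant, say $h_n = g$ for all $n \ge N$ with $g \in G_N \subset G_\infty$. For $n \ge N$, the retraction argument above yields $\Pi_n(g) = g = h_n$. For $n < N$, apply the identity $\Pi_n = \Pi_n \Pi_N$ to obtain $\Pi_n(g) = \Pi_n(\Pi_N(g)) = \Pi_n(h_N) = h_n$, where the last step invokes the inverse-limit compatibility of $(h_n)$. Hence $\phi(g) = (h_n)$, as required.

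Finally, injectivity of $\phi|_{G_\infty}$ is immediate since Axiom \ref{retracts} already postulates that $\phi : G \to \Pi G_n$ is one to one. Combining the three steps gives the bijection.

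The main ``obstacle'' is really just notational: one must keep straight the two roles of $\Pi_n$, as a retraction of the ambient $G$ onto $G_n$ and as inducing the bonding map $G_{n+1} \to G_n$ of the inverse system, and use the nesting $G_N \subset G_n$ for $n \ge N$ at the key moment where a retraction is applied to a point that already lies in its target.
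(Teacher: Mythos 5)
Your argument is correct, and the first half (that $\phi$ carries $G_{\infty}$ into the eventually constant sequences, via minimality of $N$ with $g \in G_N$, nesting, and the retraction property) is essentially identical to the paper's. Where you genuinely diverge is in the ``onto'' half. The paper never constructs a preimage for an arbitrary eventually constant point of $\lim_{\leftarrow} G_n$; instead it proves the converse implication ``$\phi (g)$ eventually constant $\Rightarrow g \in G_{\infty}$'' for $g \in G$, using the topological hypotheses of axiom \ref{retracts} ($\Pi_n (g) \rightarrow g$) together with uniqueness of limits (Remark \ref{gt2}) to force $\Pi_N (g) = g$. That formulation --- an internal characterization of $G_{\infty}$ inside $G$ by the stabilization of $\phi (g)$ --- is what the paper actually reuses later, e.g.\ in the proof of Lemma \ref{induct}, where the minimal stabilization index of $\phi (g)$ is read off to compute ${Blowup} (g)$. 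Your route is order-free and topology-free: for an eventually constant sequence with eventual value $h_N = g \in G_N$ you exhibit $g$ itself as the preimage, using only the iterated identity $\Pi_n = \Pi_n \Pi_N$ and the nesting $G_N \subset G_n$. This proves literal surjectivity onto \emph{all} eventually constant sequences of the inverse limit, which the paper's proof, read strictly, only yields for those already lying in $\phi (G)$; and your version still recovers the paper's converse as a corollary, since if $\phi (g)$ stabilizes at $h_N \in G_N$ then $\phi (g) = \phi (h_N)$ and injectivity of $\phi$ gives $g = h_N \in G_{\infty}$. Both arguments are sound; the paper's buys the characterization needed downstream, yours buys genuine surjectivity from weaker (purely algebraic) hypotheses.
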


\begin{proof}
  Given $g \in G_{\infty}$ obtain $M$ minimal so that $g \in G_M$. If $M \leq
  M + n$ then $G_M \subset G_{M + n}$ , and since $\Pi_{M + n}$ is a
  retraction we have $\Pi_{M + n}  (g) = g$. Thus $\phi (g)$ is eventually
  constant.
  
  Conversely suppose $g \in G$ and $\phi (g)$ is eventually constant. Obtain
  $N$ minimal so that $\Pi_{N + n}  (g) = \Pi_N (g)$ if $N \leq N + n$. By
  axiom\ref{retracts} $\Pi_n (g) \rightarrow g$ and by Remark\ref{gt2} $\Pi_N
  (g) = g. \quad$ Thus since$ \Pi_N$ is a retraction $g \in G_N$
  and hence $g \in G_{\infty}$.
\end{proof}

\begin{lem}
  \label{decent}Suppose $\{a, b\} \subset G$ and $a < b$. Then there exists
  $N$ so that if $N \leq n$, then $a < \Pi_n (b)$.
\end{lem}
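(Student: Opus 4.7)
The plan is to argue by contradiction, using the monotonicity and convergence of $\Pi_n(b)$ supplied by axioms \ref{retracts} and \ref{lin}, and splitting on whether or not this sequence is eventually constant.

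First I would assemble what axiom \ref{lin} gives us about the sequence $\{\Pi_n(b)\}$: it is nondecreasing in $n$ and bounded above by $b$, and by axiom \ref{retracts} it converges to $b$. Suppose the conclusion fails, so there are infinitely many $n$ with $\Pi_n(b)\le a$. Because $\Pi_n(b)\le \Pi_m(b)$ whenever $n\le m$, a single arbitrarily late index $n_k$ with $\Pi_{n_k}(b)\le a$ already forces $\Pi_n(b)\le a$ for every $n\le n_k$; letting $n_k\to\infty$ yields $\Pi_n(b)\le a$ for all $n$.

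Next I would split into two cases. In the easy case, $\{\Pi_n(b)\}$ is eventually constant. Then $\phi(b)=\{\Pi_n(b)\}$ is an eventually constant sequence in $\lim_{\leftarrow}G_n$, so Lemma \ref{basicgood} gives $b\in G_\infty$, and inspecting its proof one sees $b=\Pi_N(b)$ for large $N$. Thus $b\le a$, contradicting $a<b$. In the other case, the nondecreasing sequence $\{\Pi_n(b)\}$ admits a strictly increasing subsequence $g_1<g_2<\cdots$ with $g_k=\Pi_{n_k}(b)$. Since $\Pi_n(b)\to b$ and $G$ is $T_2$ (Remark \ref{gt2}), the subsequence $g_k$ also converges to $b$. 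Therefore in axiom \ref{lin}'s dichotomy the ``every subsequence diverges'' alternative fails, so $\lim g_k=\sup\{g_k\}$, and hence $b=\sup\{g_k\}\le a$ since each $g_k\le a$. Again this contradicts $a<b$.

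The only real obstacle is the second case, which requires knowing that some subsequence of $\{\Pi_n(b)\}$ is strictly increasing; this is immediate from the assumption that the nondecreasing sequence is not eventually constant. Once the strictly increasing subsequence is in hand, axiom \ref{lin} does the remaining work by identifying $\lim$ with $\sup$.
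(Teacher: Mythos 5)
Your proof is correct and follows essentially the same route as the paper: assume failure, use monotonicity of $\{\Pi_n(b)\}$ to get $\Pi_n(b)\le a$ for all $n$, then conclude $b=\sup\{\Pi_n(b)\}\le a$, contradicting $a<b$. The only difference is that you justify the identification of the limit with the supremum more carefully, splitting into the eventually-constant case (via Lemma \ref{basicgood}) and the strictly increasing case (via the dichotomy of axiom \ref{lin}), whereas the paper simply asserts $\sup \Pi_n(b)=b$ ``by axiom \ref{lin}''; your added care is welcome but does not change the argument.
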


\begin{proof}
  By axioms \ref{retracts} we know $\Pi_n (b) \rightarrow b$ and also $\Pi_1
  (b) \leq \Pi_2 (b)$.... $\leq b$. Thus if the result were false, and since
  $<$ is a linear order, we would have $\Pi_n (b) \leq a$ for all $n$. Hence
  since $\sup b_n = b$ by axiom \ref{lin}, we would obtain the contradiction
  $b \leq a$.
\end{proof}

\begin{lem}
  \label{big}Suppose $V \subset G$ is clopen and $V < b$. Then there exists
  $N$ so that $V < \Pi_n (b)$ if $N \leq n$.
\end{lem}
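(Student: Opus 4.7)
The plan is to prove this by contradiction, using axiom \ref{lin} to convert order-theoretic approximation of $b$ into topological convergence, and exploiting $V$ closed with $b \notin V$ to reach a contradiction. The case $b \in G_{\infty}$ is immediate: Lemma \ref{basicgood} gives $\Pi_n(b) = b$ for all sufficiently large $n$, so $V < b$ directly supplies the conclusion. Assume henceforth $b \notin G_{\infty}$; then $\Pi_n(b) \in G_{\infty}$ forces $\Pi_n(b) \neq b$, and combined with $\Pi_n(b) \leq b$ from axiom \ref{lin} this yields $\Pi_n(b) < b$ strictly for every $n$.

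Suppose the lemma fails, so that for arbitrarily large $m$ there is some $v \in V$ with $v \geq \Pi_m(b)$. I would recursively build strictly increasing sequences $v_1 < v_2 < \cdots$ in $V$ and $n_1 < n_2 < \cdots$ satisfying $\Pi_{n_k}(b) < v_k$ for each $k$. Given $v_k$, Lemma \ref{decent} applied to $v_k < b$ yields $n_{k+1} > n_k$ with $\Pi_m(b) > v_k$ for all $m \geq n_{k+1}$, and the assumed failure of the lemma produces $v_{k+1} \in V$ with $v_{k+1} \geq \Pi_{n_{k+1}}(b) > v_k$. The strict inequality $\Pi_{n_{k+1}}(b) < v_{k+1}$ is secured by also requiring $n_{k+1}$ large enough that $\Pi_{n_{k+1}}(b) \notin V$; such choices exist because $V$ is open, $b \notin V$, and $\Pi_n(b) \to b$.

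Interleaving now yields the strictly increasing sequence
\[
\Pi_{n_1}(b) < v_1 < \Pi_{n_2}(b) < v_2 < \Pi_{n_3}(b) < v_3 < \cdots .
\]
Its subsequence $\{\Pi_{n_k}(b)\}$ converges to $b$, ruling out the ``every subsequence diverges'' alternative of axiom \ref{lin}. The axiom therefore forces the whole interleaved sequence to converge to its supremum, which equals $b$ since every term is $\leq b$ while $\sup_k \Pi_{n_k}(b) = b$. Consequently $v_k \to b$, and closedness of $V$ gives $b \in V$, contradicting $V < b$.

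The crux is that applying axiom \ref{lin} directly to $(v_k)$ leaves the ``all subsequences of $(v_k)$ diverge'' alternative open, and the order-theoretic inequality $v_k \geq \Pi_{n_k}(b)$ alone seems insufficient to exclude it. The interleaving trick smuggles the guaranteed convergent sequence $\Pi_{n_k}(b) \to b$ into the strictly increasing chain, forcing axiom \ref{lin} to deliver $v_k \to b$ as a topological conclusion.
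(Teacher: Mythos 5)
Your proof is correct and takes essentially the same approach as the paper's: both extract elements of $V$ dominating the $\Pi_n(b)$, interleave them with the $\Pi_n(b)$ into a strictly increasing sequence, and use axiom \ref{lin} together with $\Pi_n(b)\rightarrow b$ to force a sequence in $V$ to converge to $b$, contradicting closedness of $V$. Your separate treatment of the case $b\in G_{\infty}$ and your use of Lemma \ref{decent} to guarantee strictness of the interleaving merely tidy up details the paper's version glosses over.
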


\begin{proof}
  Since the sequence $\Pi_n \{b\}$ is nondecreasing (axiom \ref{lin}) it
  suffices to find $N$ so that V$< \Pi_N (b)$. To get a contradiction suppose
  no such $N$ exists. For each $n$ obtain $k_n \in V$ so that $\Pi_n (b) \leq
  k_n$. If there exists $k_N$ so that $\Pi_n (b) \leq k_N$ for all $n$, then
  $b \leq k_N$ by axiom \ref{lin}. But since $k_N \in V$, this would
  contradict the assumption that $V < b$. Thus no such $k_N$ exists and hence
  for each $N$ the inequality $\Pi_n (b) \leq k_N$ as only finitely many
  solutions. Hence, starting at $k_1$, we can recursively manufacture
  interleaved subsequences $k_1 < b_{n_1} \leq k_{n_1} < b_{n_2} \leq
  k_{n_2}$..... Thus by axiom \ref{lin} both subsequences converge to the same
  limit, and in particular $\{k_n \}$ has a subsequence converging to $b$.
  Since $V$ is closed, we get the contradiction $b \in V$.
\end{proof}

\begin{lem}
  \label{basic2}If $U \subset G$ is nonempty and clopen in $G$, then $minU \in
  G_{\infty}$. If the convergent strictly increasing sequence $g_1 < g_2 ...
  \rightarrow g$ then $g \notin G_{\infty}$ and in particular there exists $N$
  so that if $N \leq n < m$ then $\Pi_n (g) < \Pi_m (g) < g$.
\end{lem}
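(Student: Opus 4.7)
My plan is to handle the two halves of the statement separately, both via contradiction. For the first, set $u:=\min U$ and suppose $u\notin G_\infty$. By axioms \ref{retracts} and \ref{lin}, the sequence $\Pi_n(u)$ is non-decreasing and converges to $u$; Lemma \ref{basicgood} rules out eventual constancy, so I can extract a strictly increasing subsequence $\Pi_{n_k}(u)\to u$. Each term $\Pi_{n_k}(u)$ lies in $G_\infty$ and hence differs from $u$, forcing $\Pi_{n_k}(u)<u$ via axiom \ref{lin}. Since $u\in U$ and $U$ is open, $\Pi_{n_k}(u)\in U$ for large $k$, contradicting $u=\min U$.

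For the second half, I would first rule out $g\in G_\infty$ by a brief continuity argument: if $g\in G_N$ then $\Pi_N(g)=g$, and continuity of the retraction $\Pi_N$ into the discrete space $G_N$ forces $\Pi_N(g_n)=g$ for all large $n$; but axiom \ref{lin} gives $\Pi_N(g_n)\leq g_n<g$, a contradiction. Consequently $\Pi_n(g)<g$ for every $n$, because axiom \ref{lin} yields $\Pi_n(g)\leq g$ and equality would place $g$ in $G_n\subset G_\infty$.

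For the strict monotonicity tail, Lemma \ref{basicgood} excludes eventual constancy of $\Pi_n(g)$, so strict jumps occur infinitely often. To upgrade ``infinitely often'' to ``from some index onward,'' I would apply Lemma \ref{decent} to each pair $(g_k,g)$, producing $N_k$ with $g_k<\Pi_n(g)$ for $n\geq N_k$; since the $g_k$ ascend strictly to $g$, the $N_k$ march off to infinity and confine each $\Pi_n(g)$ to a slot bounded below by $g_k$. The main obstacle I anticipate here is ruling out flat stretches $\Pi_n(g)=\Pi_{n+1}(g)$ at arbitrarily late indices; since the three axioms do not obviously preclude these on their own, I would lean on the care put into the linear order in the paper's preamble (comparisons via unreduced projections) to force a strict jump at every sufficiently late step.
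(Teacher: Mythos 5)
Your treatment of the first claim and of the assertion $g \notin G_{\infty}$ is correct and in substance identical to the paper's. For the first claim the paper is slightly more direct: it takes any $b \in U \setminus G_{\infty}$, uses openness of $U$ together with $\Pi_n (b) \rightarrow b$ to find a single $N$ with $\Pi_N (b) \in U$, and concludes $\Pi_N (b) < b$ because $\Pi_N (b) \leq b$ while $\Pi_N (b) \in G_{\infty}$ and $b \notin G_{\infty}$; your detour through Lemma \ref{basicgood} and a strictly increasing subsequence is unnecessary but harmless. For the second claim the paper observes that $g \in G_N$ is the minimum of the clopen set $\Pi_N^{-1} \Pi_N (g)$, which an increasing sequence converging to $g$ must eventually enter; that is the same contradiction you reach via continuity of $\Pi_N$ into the discrete space $G_N$.

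The obstacle you flag for the final clause is real, and it cannot be overcome, because the clause is false as stated; the paper's own proof simply does not address it. In the intended model (take $G = X_{\infty}$ with trivial quotients, or the Hawaiian earring group itself), let $g = x_1 x_3 x_5 \cdots$; the partial words $x_1 < x_1 x_3 < x_1 x_3 x_5 < \cdots$ form a strictly increasing sequence converging to $g$, yet $\Pi_{2k-1} (g) = \Pi_{2k} (g)$ for every $k$, so no $N$ can make $\Pi_n (g) < \Pi_m (g)$ for all $N \leq n < m$. In particular, leaning on the order constructed in Section \ref{apps} cannot rescue the claim (and would in any case break the axiomatic self-containment of Section 2). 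What is true, and what the downstream use in Lemma \ref{induct} actually requires, is weaker: $\Pi_n (g) < g$ for every $n$ (which you prove), together with the occurrence of strict jumps $\Pi_n (g) < \Pi_{n+1} (g)$ at arbitrarily large indices, which is immediate from Lemma \ref{basicgood} since $g \notin G_{\infty}$ forces $\phi (g)$ to be non-eventually-constant. Your write-up already contains both of these facts; the correct resolution is to restate the tail of the lemma in that weaker form rather than to try to prove it as written.
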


\begin{proof}
  Suppose $U \subset G$ is nonempty and clopen. By axiom \ref{lin} $minU$
  exists. Suppose $b \in U \setminus G_{\infty}$. Since $U$ is open and since
  $\Pi_n (b) \rightarrow b$ (axiom \ref{retracts}), obtain $N$ so that $\Pi_N
  (b) \in U$. By axiom \ref{lin} $\Pi_n (b) \leq b$ and since $\Pi_N (b) \in
  G_N \subset G_{\infty}$ and since $b \notin G_{\infty}$ we have $\Pi_N (b) <
  b$. Hence $b$ is not minimal in $U$.
  
  If $g \in G_N$, then since $\Pi_N \leq id|G_N$, $g$ is minimal in the clopen
  set $V = \Pi_N^{- 1} \Pi_N (g)$. Since $V$ is open and since $g$ is minimal
  in $V$ it is impossible that there exists a convergent sequence $g_1 < g_2
  .... \rightarrow g$.
  
  \ 
\end{proof}

\begin{definition}
  If $L$ and $H$ are linealry ordered sets a function $f : L \rightarrow 2^H$
  is \em{strictly increasing} if, given $s < t$ in $L$, if $x \in f (s)$
  and $y \in f (t)$, then $x < y$.
\end{definition}

\begin{lem}
  \label{ezinc}Let $S \subset 2^G$ denote the collection of clopen sets in $G$
  and suppose $[0, i)$ is an intital segment of the well ordered set $J$.
  Suppose $\gamma : [0, i) \rightarrow S$ is strictly increasing and suppose
  $V (j) = \cup_{k \leq j} \gamma (k)$ is clopen for each $j < i$. Then $V (i)
  = \cup_{k \leq i} \gamma (k)$ is missing at most one limit point $x$. If so,
  there exists an increasing $s_1 < s_2$... sequence terminal in $[0, i)$.
  Moreover for any terminal increasing sequence $t_1 < t_2 <$... in $[0, i)$,
  if $x_n \in \gamma (t_n)$ then $x_n \rightarrow x$ with $x = sup (V (i))$.
\end{lem}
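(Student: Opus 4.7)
The plan is to use sequentiality of $G$ to detect failure of $V(i)$ to be closed via convergent sequences, and then exploit strict increasingness of $\gamma$ together with axiom \ref{lin} to force every hypothetical missing limit point to coincide with $\sup V(i)$. First, $V(i)$ is open as a union of open sets; since $G$ is sequential, any failure of $V(i)$ to be closed manifests as a convergent sequence $a_n \to x$ with $a_n \in V(i)$ and $x \notin V(i)$. Pick $k_n < i$ with $a_n \in \gamma(k_n)$. If some $j < i$ satisfied $k_n \leq j$ for infinitely many $n$, a subsequence of $\{a_n\}$ would lie in the clopen set $V(j)$, forcing $x \in V(j) \subset V(i)$, a contradiction. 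Hence a subsequence $k_{n_1} < k_{n_2} < \cdots$ is strictly increasing and terminal in $[0,i)$; by strict increasingness of $\gamma$ the corresponding $a_{n_1} < a_{n_2} < \cdots$ is strictly increasing in $G$, and axiom \ref{lin} then yields $x = \sup\{a_{n_m}\}$. This already produces the terminal increasing sequence $\{s_n\}$ claimed in the lemma.

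For uniqueness of the missing limit point, suppose both $x$ and $y$ are missing limit points, with terminal strictly increasing index sequences $\{k_n\}$ and $\{k_n'\}$ producing $a_n \to x$ (with $a_n \in \gamma(k_n)$) and $b_n \to y$ (with $b_n \in \gamma(k_n')$). Because both index sequences are terminal in $[0,i)$, I can extract subsequences interleaved as $k_{n_1} < k'_{m_1} < k_{n_2} < k'_{m_2} < \cdots$. Strict increasingness of $\gamma$ then yields an interleaved chain $a_{n_1} < b_{m_1} < a_{n_2} < b_{m_2} < \cdots$ in $G$, which is strictly increasing and contains a convergent subsequence ($a_{n_r} \to x$). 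By axiom \ref{lin}, the whole chain converges to its supremum, and since $b_{m_r} \to y$ is a subsequence of that same chain, $x = y$.

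For the final claim, fix any terminal strictly increasing $t_1 < t_2 < \cdots$ in $[0,i)$ with $x_n \in \gamma(t_n)$. Interleaving $\{t_n\}$ with the $\{s_n\}$ that produced $x$, I extract subsequences whose $G$-images alternate, giving a subsequence $\{x_{q_r}\}$ sandwiched inside a strictly increasing chain already known to converge to $x$. Thus some subsequence of the strictly increasing $\{x_n\}$ fails to diverge, so axiom \ref{lin} forces $\lim x_n = \sup\{x_n\}$, and the sandwiched subsequence identifies this common limit as $x$. For any $v \in V(i)$, picking $k < i$ with $v \in \gamma(k)$ and then $n$ with $t_n > k$ (by terminality of $\{t_n\}$), strict increasingness of $\gamma$ gives $v < x_n \leq x$, while $x_n \in V(i)$ with $\sup\{x_n\} = x$ shows $x = \sup V(i)$. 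I expect the main obstacle to be the uniqueness step, since it requires carefully coupling two cofinal sequences in a well ordered initial segment and transferring that coupling to $G$ through strict increasingness of $\gamma$; once uniqueness is in place, the other assertions follow by essentially the same interleaving trick.
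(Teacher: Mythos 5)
Your proposal is correct and follows essentially the same route as the paper: use sequentiality of $G$ to realize any failure of closedness as a convergent sequence, show the index sequence must be cofinal in $[0,i)$ (else the limit lands in some clopen $V(j)$), pass to strictly increasing interlaced chains via strict increasingness of $\gamma$, and invoke axiom \ref{lin} to identify all limits with $\sup V(i)$. The only differences are cosmetic — you fold the paper's "no constant subsequence" and "unboundedness" steps into one, and you carry out the interleaving more explicitly.
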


\begin{proof}
  Note $V (i)$ is open in $G$. If $V (i)$ is not closed in $G$, then, since
  $G$ is a sequential space (axiom \ref{retracts}) suppose $\{k_n \} \subset V
  (i)$ is a convergent sequence so that $k_n \rightarrow x \notin V (i)$.
  Suppose $a_n$ is also a convergent sequence in $V_i$ with $a_n \rightarrow y
  \notin V (i)$. To prove $V (i)$ is missing precisely one limit point it
  suffices to show that $x = y$, since this will ensure $V (i) \cup \{x\}$ is
  sequentially closed in the sequential space $T_2$ space $G$ (axiom
  \ref{retracts}), and hence that $V (i) \cup \{x\}$ is closed in $G$.
  
  Note $\{k_n \}$ admits no constant subsequence since otherwise we get the
  contradiction $x \in \gamma (s)$ for some $s$. Thus we may refine so that
  the terms of $\{k_n \}$ and $\{a_n \}$ are distinct. Moreover since $[0, i)$
  is well ordered, and since $\gamma_i$ is increasing, we may further refine
  so that both sequences are strictly increasing. Thus wolog $k_n \in \gamma_i
  (s_n)$ and $a_n \in \gamma_i (t_n)$ with $s_1 < s_2$... and $t_1 < t_2$...,.
  
  Note $\{s_n \}$ is unbounded in $[0, i)$ since otherwise we get the
  following contradiction. Let sup$\{s_n \} = s < i$. Then $V (s)$ is a closed
  subspace of $G$ and hence $x \in V (s) \subset V (i)$. Thus both sequences
  $\{s_n \}$ and $\{t_n \}$ are unbounded in the well ordered set $[0, i)$ and
  hence the sequences are interlaced. It follows from axiom \ref{lin} that $x
  = y$ and $x = sup (V (i))$.
\end{proof}

\begin{lem}
  \label{induct}Suppose $[0, i)$ is an initial segment of the well ordered set
  $J$, suppose $S$ denotes the clopen subsets of $G$. Suppose the functions
  $\kappa : [0, i) \rightarrow G$ and $\gamma : [0, i) \rightarrow S$ are
  strictly increasing. Suppose is a function $K : [0, i) \rightarrow S.$
  Suppose $V (j) = \cup_{k \leq j} \gamma (k)$ is clopen in $G$ for each $j <
  i$. Suppose for each $j < i$ we have $\gamma (j) = (\Pi_{\eta (j)}^{- 1}
  \Pi_{\eta (j)} (\kappa (i))) \setminus K (j)$ with $K (j)$ clopen in $G$ and
  $\kappa (j) < K (j)$. Suppose given $j, \kappa (j)$ and $K (j)$, the index
  $\eta (j)$ is minimal to ensure that $\gamma | [0, j]$ is increasing.
  Suppose $K (j)$ is eventually constant, i.e. there is a clopen set $K
  \subset G$ and $\lambda_1 \in [0, i)$ so that $K (j) = K$ if $\lambda_1 \leq
  j < i$. Then $V (i) = \cup_{j < i} \gamma (j)$ is clopen in $G$.
\end{lem}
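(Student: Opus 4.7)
The plan is to invoke Lemma \ref{ezinc} on the strictly increasing function $\gamma$, so that $V(i)$ is automatically open, and is either clopen (and we are done) or misses exactly one limit point $x=\sup V(i)$, witnessed by some terminal strictly increasing sequence $t_1<t_2<\dots$ in $[0,i)$ along which any choice $x_n\in\gamma(t_n)$ converges to $x$. Assume for contradiction the second alternative. By passing to a tail I may assume $t_n\geq\lambda_1$, so $K(t_n)=K$ throughout. The distinguished choice $x_n=\kappa(t_n)$ is legal: $\kappa(t_n)$ lies trivially in $\Pi_{\eta(t_n)}^{-1}\Pi_{\eta(t_n)}(\kappa(t_n))$, and $\kappa(t_n)<K(t_n)=K$ so $\kappa(t_n)\notin K$, hence $\kappa(t_n)\in\gamma(t_n)$. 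By Lemma \ref{ezinc} the sequence $\kappa(t_n)$ (strictly increasing by strict monotonicity of $\kappa$) converges to $x$, so Lemma \ref{basic2} forces $x\notin G_{\infty}$.

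I would next locate $x$ relative to $K$. From $\kappa(t_n)<k$ for every $k\in K$ and $\lim\kappa(t_n)=\sup\kappa(t_n)=x$ (axiom \ref{lin}) I obtain $x\leq\min K$; since $\min K\in G_{\infty}$ (Lemma \ref{basic2}) while $x\notin G_{\infty}$, the inequality is strict, so $x\notin K$. Next comes the key observation: $\eta(t_n)\to\infty$. Indeed, for each fixed $N$, continuity of $\Pi_N$ and discreteness of $G_N$ force $\Pi_N(\kappa(t_n))=\Pi_N(x)$ for all sufficiently large $n$; if $\eta(t_n)=N$ for infinitely many $n$, then some such $n$ also satisfies $\Pi_N(\kappa(t_n))=\Pi_N(x)$, whence $x\in\Pi_N^{-1}\Pi_N(\kappa(t_n))$, and combined with $x\notin K$ this gives $x\in\gamma(t_n)\subset V(i)$, a contradiction.

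The final step, which I expect to be the main obstacle, is to derive a contradiction from $\eta(t_n)\to\infty$. Minimality of $\eta(t_n)$ means that for each $n$ with $\eta(t_n)>1$, the alternative set $\Pi_{\eta(t_n)-1}^{-1}\Pi_{\eta(t_n)-1}(\kappa(t_n))\setminus K$ fails to lie strictly above $\gamma(s)$ for some $s<t_n$. Since axiom \ref{trap} is phrased in terms of Blowups of elements of $G_{\infty}$, rather than arbitrary $\kappa(t_n)\in G$, I would replay the argument through the minima $m_n=\min\gamma(t_n)\in G_{\infty}$ (Lemma \ref{basic2}), which also satisfy $m_n\to x$ by Lemma \ref{ezinc} and are strictly increasing in $G_{\infty}$. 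Taking triples $m_{n_1}<m_{n_2}<m_{n_3}$ from this sequence and using that $\eta(t_{n_j})\to\infty$ should yield a Blowup-nesting $\mathrm{Blowup}(m_{n_3})\subset\mathrm{Blowup}(m_{n_1})$, whereupon axiom \ref{trap} forces $\mathrm{Blowup}(m_{n_2})\subset\mathrm{Blowup}(m_{n_1})$. Using Lemma \ref{big} to compare these Blowups with the earlier sets $\gamma(s)$ that witnessed minimality of $\eta$, this nesting is incompatible with the failures that $\eta(t_n)-1$ is required to exhibit, producing the desired contradiction and showing $V(i)$ is clopen.
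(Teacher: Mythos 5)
Your setup reproduces the first half of the paper's argument faithfully: Lemma \ref{ezinc} gives the unique missing limit point $x=\sup V(i)$ and a terminal sequence $t_1<t_2<\dots$ with $\kappa(t_n)\to x$; the observations that $x\notin K$, $x<K$, $x\notin G_{\infty}$, and that for each fixed $N$ the equality $\eta(t_n)=N$ can hold only finitely often (else $x\in\gamma(t_n)$) all match steps the paper actually takes. The problem is that the step you yourself flag as ``the main obstacle'' is the entire content of the lemma, and your sketch of it does not describe an argument that closes. You propose to extract a Blowup nesting from triples $m_{n_1}<m_{n_2}<m_{n_3}$ taken \emph{along the terminal sequence} and then declare this ``incompatible with the failures that $\eta(t_n)-1$ is required to exhibit.'' Minimality of $\eta(t_n)$ only tells you that for each $n$ there is \emph{some} $s<t_n$ at which the level $\eta(t_n)-1$ fails to keep $\gamma$ increasing; nothing in your sketch pins down such an $s$, relates it to the Blowup nesting, or explains why the nesting forbids it. As written, the contradiction is asserted rather than derived.

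The paper's mechanism is genuinely different at this point and is worth contrasting. It first fixes a single finite level $N$, minimal with $\gamma(\lambda_1)<\Pi_N(x)$ (Lemmas \ref{big} and \ref{basic2}); minimality of $N$ gives ${Blowup}(\Pi_N(x))=\Pi_N^{-1}\Pi_N(x)$, and the anchor $\gamma(\lambda_1)<\Pi_N(x)<x<K$ yields $\Pi_N(x)\notin\gamma(s)$ for \emph{every} $s<i$. It then uses the well-ordering of $[0,i)$ to select $\lambda_2$ \emph{minimal} with $\Pi_N(x)<\min\gamma(\lambda_2)$ --- an index that in general does not lie on your terminal sequence --- and applies axiom \ref{trap} to the triple $\Pi_N(x)<\min\gamma(\lambda_2)\leq\min\gamma(s_M)$ (the outer containment ${Blowup}(\kappa(s_M))\subset{Blowup}(\Pi_N(x))$ coming from $\Pi_N(\kappa(s_M))=\Pi_N(x)$ and $N<\eta(s_M)$). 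The resulting containment ${Blowup}(\min\gamma(\lambda_2))\subset{Blowup}(\Pi_N(x))$ shows that the finite level $N$ would already have made $\gamma|[0,\lambda_2]$ increasing, contradicting minimality of $\eta(\lambda_2)$. So the contradiction is located at one carefully chosen index supplied by well-ordering, not at a pattern of triples along the cofinal sequence. To complete your proof you would need to supply an analogue of this $\lambda_2$ step; without it the argument has a genuine gap.
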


\begin{proof}
  Note $V (i)$ is open in $G$. If $[0, i)$ is bounded then $i$ has a
  predecessor $j = i - 1$ and thus $V (i) = V (j)$ is clopen by hypothesis.
  Hence assume $[0, i)$ is unbounded. To obtain a contradiction suppose $V
  (i)$ is not closed in $G$.
  
  Recall Lemma \ref{ezinc}, let $\{c\} = \overline{V_i} \setminus V_i$ with $c
  = sup (V (i))$. By Lemma \ref{ezinc} obtain a convergent sequence $s_1 <
  s_2$... terminal in $[0, i)$, and note $\kappa (s_n) \rightarrow c$. Since
  $\kappa (s_n) \notin K$ and since $K$ is clopen, $c \notin K$. By axiom
  \ref{lin} $c = sup (\kappa (s_n))$ and since eventually $\kappa (s_n) < K$
  we have $c < K$.
  
  Since $\gamma (\lambda_1) < c$ we apply Lemma \ref{big} and Lemma
  \ref{basic2} to obtain $N$ minimal so that if $N \leq n < m$ then, $\gamma
  (\lambda_1) < \Pi_N (c) \leq \Pi_n (c) < \Pi_m (c) < c$.
  
  Minimality of $N$ ensures $\Pi_{N - n} (c) < \Pi_N (c)$ if $n \geq 1$ and in
  particular $\Pi_{N - n} (c) \neq \Pi_N (c)$. Thus, recalling axiom
  \ref{trap} we have ${Blowup} (\Pi_N (c)) = \Pi_N^{- 1} \Pi_N (c)$.
  
  We next show $\Pi_N (c) \notin \gamma (s)$ for all $s$. Note by definition
  $\gamma (\lambda_1) < \Pi_N (c) < c < K$. Thus, since $\gamma$ is
  increasing, $\Pi_N (c) \notin \gamma (s)$ if $s \leq \lambda_1$. If $\lambda_1
  < s$ then $\gamma (s) =$ $\Pi_{n (s)}^{- 1} \Pi_{n (s)} (\kappa (s))
  \setminus K$. Thus, since $\Pi_N (c) < c < K$ if $\Pi_N (c) \in \gamma (s)$
  we would get the contradiction $c \in \gamma (s)$.
  
  Since $\kappa (s_1) < \kappa (s_2) ... \rightarrow c$, since $\gamma (s_1) <
  \gamma (s_2) \ldots,$and since $\Pi_N (c) < c$, obtain $M$ so that $\Pi_N
  (c) < min (\gamma (s_M))$ and also so that if $M \leq n$ then (by continuity
  of the map $\Pi_N$ with image in the discrete space $G_N$) we have $\Pi_N
  (\kappa (s_n)) = \Pi_N (c)$.
  
  In particular for all $n \leq N$ we have $\Pi_n (\kappa (s_M)) = \Pi_n (c)$.
  Consequently $N < n (s_M)$ since otherwise we get the contradiction $c \in
  \gamma (s_M)$.
  
  Recalling axiom \ref{trap}, minimality of $\eta (s_M)$ ensures
  ${Blowup} (\Pi_{\eta (s_M)} (\kappa (s_M))) = \Pi_{\eta (s_M)}^{- 1}
  \Pi_{\eta (s_M)} (\kappa (s_M))$ and thus, since $N < \eta (s_M)$ and since
  $\Pi_N (\Pi_{\eta (s_M)} (\kappa (s_M))) =$ $\Pi_N (c)$, we have
  ${Blowup} (\kappa (s_M)) \subset {Blowup} (\Pi_N (c))$.
  
  Recall we have shown $\Pi_N (c) \notin \gamma (s)$ for all $s$, and also that
  $\Pi_N (c) < min \gamma (s_M)$, and hence $\emptyset \neq \{\lambda \in [0,
  i) | \Pi_N (c) < min \gamma (\lambda)\}$. Thus, since $[0, i)$ is a well
  ordered set, obtain $\lambda_2^{} \in [0, i)$ minimal so that $\Pi_N (c) <
  min \gamma (\lambda_2^{})$. Hence, since ${Blowup} (\kappa (s_M))
  \subset {Blowup} (\Pi_N (c))$, and since $\Pi_N (c) < min (\gamma
  (\lambda_2^{}) \leq min (\gamma (s_M))$, it follows Lemma \ref{basic2} and
  axiom \ref{trap}, ${that} {Blowup} (min \gamma (\lambda_2^{}))
  \subset {Blowup} (\Pi_N (c))$.
  
  Recall the injection $\phi : G \rightarrow \lim_{\leftarrow} G_n$ from axiom
  \ref{retracts}, and note ${by} {Lemma} \ref{basicgood} \phi
  |G_{\infty}$ maps $G_{\infty}$ precisely onto the eventually constant
  sequences in $lim_{\leftarrow} G_n$. Hence, following the proof of Lemma
  \ref{basicgood}, given $g \in G_{\infty}$, to obtain ${Blowup} (g) =
  \Pi_M^{- 1} \Pi_M (g)$ defined in axiom \ref{trap} we use the minimal
  constant $M$ so $\Pi_M (g) = \Pi_{M + n} (g) $for all $n$. Thus, our
  knowledge that ${Blowup} (min \gamma (\lambda_2^{})) \subset
  {Blowup} (\Pi_N (c))$ ensures $\Pi_n (c) = \Pi_n (\kappa
  (\lambda_2^{}))$ for all $n \leq N$. However, since $\Pi_N (c) < min (\gamma
  (\lambda_2^{})) = \Pi_{\eta (\kappa \lambda_2^{})} (\kappa (\lambda_2^{}))$
  the latter set inclusion is proper and hence $N < \eta (\kappa
  (\lambda_2^{}))$.
  
  The contradiction is as follows. Given $\lambda_2^{}, \kappa (\lambda_2^{})$
  and $K$, the index $\eta (\lambda_2^{})$ was chosen minimal to ensure
  $\gamma | [0, \lambda_2^{}]$ is increasing. On the other hand $N < \eta
  (\kappa (\lambda_2^{}))$ would have been an admissible choice since $\gamma
  (s) < \Pi_N^{- 1} \Pi_N (\kappa (\lambda_2^{}))$ for all $s < \lambda_2^{}$.
\end{proof}

\begin{rem}
  \label{wclopen}Suppose $[0, i)$ is a linearly ordered set and $\gamma : [0,
  i) \rightarrow 2^G$ is increasing and suppose $\gamma (j)$ is clopen for
  each $j < i$. Suppose $V (j) = \cup_{k \leq j} \gamma (k)$ is clopen for
  each $j < i$. Then the set $W (j) = \cup_{k < j} \gamma (j)$ is clopen since
  $W (j) = V (j) \setminus \gamma (j)$, the difference of two clopens.
\end{rem}

\subsection{Thickening $B$ when $a < B$}

\begin{thm}
  \label{VaB}Suppose $a \in G$ and $a < B$ with $B \subset G$ a nonempty
  closed set. The following proof constructs a clopen set $V (a, B) \subset G$
  so that $a < V (a, B)$ and $B \subset V (a, B)$. Moreover if the convergent
  increasing sequence $a_1 < a_2 ... \rightarrow a$ there exists $M$ so that
  if $M \leq n$ then $V (a_m, B) = V (a, B)$.
\end{thm}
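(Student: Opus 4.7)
The plan is to construct $V(a, B)$ via transfinite recursion, at each stage peeling off the $<$-minimal element of $B$ not yet covered, thickening it into a basic clopen ``blowup'', and applying Lemma \ref{induct} to verify clopenness of the accumulating union. Taking the ``trap'' set $K(j) \equiv \emptyset$ throughout (which is clopen and makes the condition $\kappa(j) < K(j)$ vacuous), I will be in a position to invoke Lemma \ref{induct} at every stage.

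Start with $\kappa(0) := \min B$, which exists and satisfies $a < \kappa(0)$ by axiom \ref{lin} and the assumption $a < B$. At each ordinal stage $j$, given that $V(j) := \bigcup_{k < j}\gamma(k)$ is known to be clopen, stop if $B \subset V(j)$; otherwise set $\kappa(j) := \min(B \setminus V(j))$ and choose $\eta(j)$ minimal so that $a < \Pi_{\eta(j)}(\kappa(j))$ and $V(j) < \Pi_{\eta(j)}(\kappa(j))$, which is feasible by Lemmas \ref{decent} and \ref{big} granting the inductive invariant $V(j) < \kappa(j)$. Setting $\gamma(j) := \Pi_{\eta(j)}^{-1}\Pi_{\eta(j)}(\kappa(j))$, every $g \in \gamma(j)$ satisfies $g \ge \Pi_{\eta(j)}(g) = \Pi_{\eta(j)}(\kappa(j)) > a$ by axiom \ref{lin}, so $\gamma(j) > a$. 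Lemma \ref{induct} then yields $V(j+1)$ clopen. The $\kappa(j)$ form a strictly $<$-increasing sequence in $B$: for $k < j$, $\kappa(k) \in V(j) \not\ni \kappa(j)$ forces $\kappa(k) \ne \kappa(j)$, while any $b \in B$ with $b < \kappa(k)$ would lie in $V(k) \subset V(j)$ by minimality of $\kappa(k)$, giving $\kappa(k) < \kappa(j)$. Thus the recursion terminates at some ordinal $i$, and I set $V(a, B) := V(i)$, which is clopen, contains $B$, and satisfies $V(a, B) > a$.

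For the uniformity statement, observe that the $a$-dependence enters only at stage $0$: for $j \ge 1$ the condition $V(j) < \Pi_{\eta(j)}(\kappa(j))$ combined with $V(j) > a$ already forces $\Pi_{\eta(j)}(\kappa(j)) > a$, so the explicit $a$-condition is redundant and $\eta(j)$ depends only on $B$ and earlier choices. Hence once the constructions for $a$ and $a_m$ agree at stage $0$, they agree throughout. At stage $0$, $\eta(0)_a$ is minimal with $a < \Pi_{\eta(0)_a}(\kappa(0))$; by Lemma \ref{basic2} $a \notin G_\infty$, so each of the finitely many $\Pi_\eta(\kappa(0))$ with $\eta < \eta(0)_a$ is strictly below $a$. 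Since $a = \sup a_m$ (axiom \ref{lin}), there is $M$ with $a_m > \max_{\eta < \eta(0)_a}\Pi_\eta(\kappa(0))$ for $m \ge M$, giving $\eta(0)_{a_m} = \eta(0)_a$ and hence $V(a_m, B) = V(a, B)$.

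The main obstacle is verifying the inductive invariant $V(j) < \kappa(j)$, required to apply Lemma \ref{big}. At a successor $j = k+1$, granting $V(k) < \gamma(k)$ (by strict increase) and $V(k) < \kappa(k)$ (by induction), one reduces to showing $\gamma(k) < \kappa(k+1)$. A priori the blowup $\gamma(k) = \Pi_{\eta(k)}^{-1}\Pi_{\eta(k)}(\kappa(k))$ could contain some $x$ with $\kappa(k+1) < x$, witnessing an interleaving $\Pi_{\eta(k)}(\kappa(k)) \le \kappa(k+1) < x$ with $x \in \mathrm{Blowup}(\Pi_{\eta(k)}(\kappa(k)))$. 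Axiom \ref{trap} is engineered to exclude precisely this: after passing to a suitable $\Pi_n$-approximation of $\kappa(k+1)$ (using axiom \ref{retracts} and Lemma \ref{basic2} when $\kappa(k+1) \notin G_\infty$), the no-interleaving property forces $\kappa(k+1)$ itself into $\mathrm{Blowup}(\Pi_{\eta(k)}(\kappa(k))) = \gamma(k) \subset V(k+1)$, contradicting $\kappa(k+1) \notin V(k+1)$. Limit stages are controlled by Lemma \ref{ezinc}, which pins down any missing limit point as $\sup V(j)$.
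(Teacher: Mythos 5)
Your construction is essentially identical to the paper's: the same transfinite recursion peeling off $\min(B\setminus W(j))$, thickening it into $\Pi_{\eta(j)}^{-1}\Pi_{\eta(j)}(\kappa(j))$ with the trap sets $K(j)\equiv\emptyset$, clopenness of the accumulating union via Lemma \ref{induct}, termination by a cardinality count, and the same stabilization argument showing $V(a_m,B)=V(a,B)$ once $a_m$ exceeds the finitely many projections $\Pi_\eta(\kappa(0))$ lying below $a$. Your closing paragraph on the invariant $V(j)<\kappa(j)$ (needed to legitimately invoke Lemma \ref{big}) addresses a step the paper leaves implicit and is on the right track, but to make it precise you should note that $\mathrm{Blowup}(\Pi_{\eta(k)}(\kappa(k)))=\gamma(k)$ only because minimality of $\eta(k)$ forces $\Pi_{\eta(k)-1}(\kappa(k))\neq\Pi_{\eta(k)}(\kappa(k))$, and that axiom \ref{trap} must be applied to a triple of projections $\Pi_{\eta(k)}(\kappa(k))<\Pi_n(\kappa(k+1))<\Pi_m(x)$ lying in $G_\infty$ (with $n,m\geq\eta(k)$ chosen via Lemma \ref{decent}) before pulling the resulting membership back to $\kappa(k+1)$ itself.
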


\begin{proof}
  Let $J$ be a well ordered set with minimal element $0$ so that $G_{\infty} <
  | J |$. By axiom \ref{lin} let $b_0 = minB$. Apply Lemma \ref{decent} and
  obtain $N$ minimal so that $a < \Pi_N (b_0)$. Define $\gamma (0) = \Pi_N^{-
  1} (\Pi_N (b_0))$. We will use Lemma \ref{induct} repeatedly, in the special
  case $K_j = \emptyset$ for all $j$.
  
  Let $S$ denote the clopen subsets of $G$. Suppose $i \in J$ and $\gamma |
  [0, i) \rightarrow S$ is strictly increasing, suppose for each $j \leq i$
  the set $V (j) = \cup_{k \leq j} \gamma (k)$ is clopen in $G$. Suppose for
  each $j < i$ we have $\gamma (j) = \Pi_{n_j}^{- 1} \Pi_{n_j} (c_j)$. Suppose
  given $j$ and $c_j$ the index $n_j$ is minimal to ensure that $\gamma | [0,
  j]$ is increasing. Thus by Lemma \ref{induct} the set $W (i) = \cup_{j < i}
  V (j)$ is clopen in $G$. If $B \subset W (i)$ let $V (a, B) = W (i)$ and we
  are done.
  
  Otherwise by axiom \ref{lin} let $b_i = min (B \setminus W (i))$ and by
  Lemma \ref{big} obtain $N$ minimal so that $W (i) < \Pi_N^{- 1} \Pi_N
  (b_i)$. Define $\gamma | [0, i] = \gamma | [0, i) \cup \{i, \Pi_N^{- 1}
  \Pi_N (b_i)\}$. If $\gamma (j)$ has benn defined we have $min \gamma (j) \in
  G_{\infty}$ by Lemma \ref{basic2}. Thus, since $| G_{\infty} | < | J |$ the
  transfinite recursive construction eventually terminates in the desired
  clopen set $V (a, B)$.
  
  Suppose the increasing sequence $a_1 < a_2 ... \rightarrow a$. Recall
  $\gamma (0) = \Pi_N^{- 1} \Pi_N (b_0)$ with $b_0$ minimal in $B$ and $N$
  minimal so that $a < \Pi_N^{- 1} \Pi_N (b)$. Note for $0 < j$ the definition
  of $\gamma (j)$ only depends on $\gamma (k)$ for $k < j$, and in particular
  the definiton of $\gamma (j)$ does not depend on $a$. Thus $V (a, B)$ is
  determined by the data $B$ and $\Pi_N^{- 1} \Pi_N (b_0)$. Hence $V (a_m, B)
  = V (a, B)$ provided $N$ is minimal so that $a_m < \Pi_N (b)$. Since $a <
  b_0$, by axiom \ref{lin} the convergent nondecreasing sequences $\{\Pi_m
  (a)\}$ and $\{\Pi_n (b_0)\}$ can only be interleaved for finitely many
  terms, and hence there exists $M$ so that if $M \leq m$ then $N$ is also the
  minimal solution to $\alpha_m < \Pi_N (b_0)$. Thus $V (a_m, B) = V (a, B)$
  if $M \leq m$.
\end{proof}

\begin{lem}
  \label{seqcont}Suppose $X$ is a space and the set $[0, i]$ is well ordered
  with the order topology and suppose there exists a terminal sequence $s_1 <
  s_2$.... in $[0, i)$ and suppose $f : [0, i] \rightarrow X$ is a (not
  necessarily continuous) function. Then the following are equivalent 1) $f$
  is at continuous at $i$, (i.e. for each open $U \subset X$ with $\kappa (i)
  \in U$, there exists $j_U$ so that $(j_U, i] \subset f^{- 1} (U)$) 2) If the
  sequence $t_1 < t_2$.... is terminal in $[0, i)$ then $f (t_n) \rightarrow f
  (i)$. As a special case, if $f| \{t_n \}$ is eventually constant for each
  terminal increasing sequence $t_1 < t_2$... then $f| [0, i)$ is eventually
  constant.
\end{lem}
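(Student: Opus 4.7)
The plan is to handle the two implications separately and then deduce the special case from (2). Throughout, the key fact I will repeatedly use is that a strictly increasing sequence in $[0,i)$ is terminal precisely when it is cofinal, and that ``continuous at $i$'' says exactly that tails $(j,i]$ are eventually swallowed by every preimage of an open neighborhood of $f(i)$.

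For $(1) \Rightarrow (2)$, let $\{t_n\}$ be a terminal strictly increasing sequence in $[0,i)$ and let $U$ be an open neighborhood of $f(i)$. By hypothesis there is $j_U < i$ with $(j_U,i] \subset f^{-1}(U)$. Since $\{t_n\}$ is terminal, some $t_N > j_U$; since the sequence is strictly increasing, $t_n > j_U$ for all $n \geq N$, so $f(t_n) \in U$ eventually. This gives $f(t_n) \to f(i)$.

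For $(2) \Rightarrow (1)$ I prove the contrapositive. Suppose $f$ is not continuous at $i$ and choose an open $U \ni f(i)$ such that, for every $j<i$, the set $A = f^{-1}(X \setminus U) \cap [0,i)$ meets $(j,i)$. Using the terminal sequence $\{s_n\}$ promised by the hypothesis, define $t_n$ recursively: let $t_1 \in A \cap (s_1, i)$, and having chosen $t_1 < \cdots < t_{n-1}$, pick $t_n \in A \cap (\max(s_n,t_{n-1}), i)$, which is possible because $A$ is cofinal in $[0,i)$ and $[0,i]$ is linearly ordered. Then $\{t_n\}$ is strictly increasing, terminal (because $t_n > s_n$ and $\{s_n\}$ is terminal), but $f(t_n) \notin U$ for every $n$, so $f(t_n) \not\to f(i)$, violating (2).

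For the special case, assume $f\big|\{t_n\}$ is eventually constant for every terminal increasing sequence $\{t_n\}$ in $[0,i)$. I first show all these eventual constants coincide: given two such sequences $\{t_n\}$ and $\{r_n\}$ with eventual values $v_t$ and $v_r$, recursively build a strictly increasing sequence $\{u_n\}$ that alternately dominates $s_n$, $t_n$, and $r_n$ and contains cofinal subsequences of both $\{t_n\}$ and $\{r_n\}$ (using again that each such sequence is cofinal in $[0,i)$). Then $f(u_n)$ is eventually constant, and its two cofinal subsequences force $v_t = v_r$; call the common value $v$. If $f\big|[0,i)$ were not eventually equal to $v$, the set $A = \{k<i : f(k) \neq v\}$ would be cofinal in $[0,i)$; exactly as in the $(2) \Rightarrow (1)$ argument, with $\{s_n\}$ as scaffolding, I would build a terminal strictly increasing $\{t_n\} \subset A$, whose $f$-image is never $v$, contradicting the fact that its eventual constant must equal $v$. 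Hence $f\big|[0,i)$ is eventually constant. The only mildly delicate point is the interleaving construction for the uniqueness of the eventual value; everything else is a direct unpacking of the definitions of ``terminal,'' ``cofinal,'' and continuity at the limit point $i$.
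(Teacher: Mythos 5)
Your proof is correct and follows essentially the same route as the paper: the forward implication by passing a tail of $\{t_n\}$ beyond $j_U$, and the converse by recursively extracting a terminal sequence inside $f^{-1}(X\setminus U)$ using $\{s_n\}$ as scaffolding. You additionally spell out the ``special case'' (uniqueness of the eventual value via an interleaved terminal sequence), which the paper leaves implicit; that added detail is sound.
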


\begin{proof}
  $1 \Rightarrow 2$. Given $\kappa (i) \in U$, get the mentioned $j_U$, and
  note with finitely many exceptions we have $j_U < t_n$. $2 \Rightarrow 1$ To
  obtain a contradiction suppose not $1$. Get an open $U \subset X$ with
  $\kappa (i) \in U$ but so that $V = f^{- 1} (U)$ contains no open right ray.
  Starting with $n = 1$ and proceeding recursively, for each $n$ obtain $t_n >
  s_n$ and $t_n > t_{n - 1}$ so that $t_n \notin V$. Thus $t_1 < t_2$. and
  $\{t_n \}$ is terminal in $[0, i)$. Thus with finitely many exceptions $f
  (t_n) \in U$ and hence $t_n \in V$ eventually. This constradicts $t_n \notin
  V$.
\end{proof}

\

\subsection{$G$ has large inductive dimension 0}

\

\begin{thm}
  $\label{gzero} G$ has large inductive dimension $0$.
\end{thm}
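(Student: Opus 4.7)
My plan is to extend the transfinite recursion of Theorem \ref{VaB}, which separates a single point from a closed set when the point lies below the set, to the full statement that any two disjoint closed subsets $A,B\subset G$ can be separated by a clopen. The idea is to iterate Theorem \ref{VaB} along the elements of $A$ (ordered via axiom \ref{lin}) and glue the results together using Lemma \ref{induct} in its full generality with a nonempty, eventually constant clopen barrier $K$.

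Setup and base step: Since $A\cup B$ is closed, axiom \ref{lin} gives a minimum. By swapping $A$ and $B$ if necessary, assume $a_0 := \min(A\cup B)\in A$, so $a_0<B$. Apply Theorem \ref{VaB} to the pair $(a_0,B)$ to obtain a clopen $V_0=V(a_0,B)\supset B$ with $a_0<V_0$; then $U_0:=G\setminus V_0$ is clopen, contains $a_0$, and is disjoint from $B$. Let $J$ be a well-ordered set with $|J|>|G_\infty|$ to index the recursion.

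Recursive step: At stage $i\in J$, suppose we have built a clopen $U_i$ disjoint from $B$ containing all previously processed minima $\{a_j: j<i\}$. If $A\subset U_i$, terminate with $U=U_i$. Otherwise $A\setminus U_i$ is closed and nonempty, so let $a_i=\min(A\setminus U_i)$ by axiom \ref{lin}. In the easy case $a_i<B$, apply Theorem \ref{VaB} to $(a_i,B)$ and adjoin $G\setminus V(a_i,B)$ to $U_i$, producing a clopen $U_{i+1}$ still disjoint from $B$ and now containing $a_i$. At limit stages set $U_i=\bigcup_{j<i}U_j$, invoking Lemma \ref{induct} to conclude that this union is clopen; the barrier $K$ in Lemma \ref{induct} is chosen to be the finite Boolean combination of $B$-blowups which, by axiom \ref{trap} and careful scheduling of the recursion, stabilizes in a cofinal tail of the indexing.

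The main obstacle is the case when $a_i$ is not below all of $B$, i.e.\ some $b\in B$ satisfies $b<a_i$. Then Theorem \ref{VaB} does not directly furnish a clopen around $a_i$ disjoint from $B$. To handle this, I would apply Theorem \ref{VaB} in its symmetric guise: choose a pivot $b_*<a_i$ and thicken the singleton $\{a_i\}$ above $b_*$ to obtain a clopen containing $a_i$ disjoint from the portion of $B$ lying weakly below $b_*$; the portion of $B$ above $b_*$ which might be accidentally captured is excised by intersecting with a clopen of the form $\bigcap_j V(a_{k_j},B)$ from the already-processed stages, whose finite-Boolean stabilization is exactly the eventually constant $K$ required by Lemma \ref{induct}. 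Termination follows because, by Lemma \ref{basic2}, each new $a_i\in G_\infty$ and the $a_i$ are strictly increasing in $i$, so the recursion exhausts $A$ in fewer than $|J|$ steps; the resulting $U=\bigcup_{i\in J} U_i$ is the desired clopen separating $A$ from $B$.
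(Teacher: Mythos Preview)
Your proposal has a structural gap. The paper's construction processes $A\cup B$ together: at stage $j$ one takes $\kappa(j)=\min\bigl((A\cup B)\setminus W(j)\bigr)$ and builds a single increment $\gamma(j)=\Pi_{\eta(j)}^{-1}\Pi_{\eta(j)}(\kappa(j))\setminus K(j)$, one blowup minus one clopen barrier. Because $\kappa(j)$ is the minimum of the \emph{union}, when $\kappa(j)\in A$ one automatically has $\kappa(j)<B\setminus W(j)$, so Theorem~\ref{VaB} applies at every stage and the $\gamma(j)$ are strictly increasing in the linear order on $G$, matching the hypotheses of Lemma~\ref{induct}. At limit stages the paper argues by contradiction: if the union were not closed, its unique missing limit point $c$ forces $\kappa(j)$ eventually into one of $A,B$, the sets $B(j)$ (resp.\ $A(j)$) stabilize, and then the ``moreover'' clause of Theorem~\ref{VaB} makes $K(j)=V(\kappa(j),B(j))$ eventually constant, so Lemma~\ref{induct} yields the contradiction. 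Your scheme instead extracts minima only from $A$, which immediately creates the ``main obstacle'' you flag: after stage~$0$ there is no reason $a_i<B$, and your handling of that case (an unspecified pivot $b_*$, thickening $\{a_i\}$, excising via intersections of earlier $V(a_{k_j},B)$) is a sketch, not a proof---you have not said which $b_*$, why the excision removes all of $B$, or why such a $K$ stabilizes. The paper's symmetric recursion over $A\cup B$ is precisely what makes this obstacle disappear.

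Two further concrete errors. First, Lemma~\ref{induct} requires each $\gamma(j)$ to be a single blowup $\Pi_{\eta(j)}^{-1}\Pi_{\eta(j)}(\kappa(j))$ minus a clopen $K(j)$, with $\eta(j)$ \emph{minimal} for monotonicity and with $\gamma$ strictly increasing in the order on $G$; your increments $U_{j+1}\setminus U_j\subset G\setminus V(a_j,B)$ are complements of transfinite unions of blowups and are neither of that shape nor order-increasing, so Lemma~\ref{induct} does not apply to them as written. Second, your termination argument misuses Lemma~\ref{basic2}: that lemma gives $\min U\in G_\infty$ only for \emph{clopen} $U$, whereas $A\setminus U_i$ is merely closed, so $a_i$ need not lie in $G_\infty$ and the bound $|J|>|G_\infty|$ is unjustified. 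In the paper termination is driven by $\min\gamma(j)\in G_\infty$ (the $\gamma(j)$ are clopen), not by the $\kappa(j)$ themselves.
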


\begin{proof}
  Let $J$ be a well ordered set with minimal element $0$ so that $| J
  |${\bigskip}$\geq | G_{\infty} |$. Suppose $A$ and $B$ are disjoint nonempty
  closed sets in $G$. Define $\kappa (0) = min (A \cup B)$. If $\kappa (0) \in
  A$ define $B (0) = B$, and define $K (0) = V (\kappa (0), B (0))$, the
  clopen set from Theorem \ref{VaB}. \ If $\kappa (0) \in B$ define$A (0) = A$
  and, again using the construction from Theorem \ref{VaB}, define $K (0) = V
  (\kappa (0), A)$. Define $\gamma (0) = \Pi_1^{- 1} \Pi_1 (\kappa (0))
  \setminus K (0)$.
  
  Transfinite induction hypothesis: Suppose $i \in J$ and $\gamma : [0, i)
  \rightarrow S$ satisfies, with one possible exception, all of the hypotheses
  of Lemma \ref{induct}, but not necessarily the requirement that $K (j)$ is
  eventually constant.
  
  To be precise suppose $\kappa : [0, i) \rightarrow A \cup B$ is a function.
  Suppose $V (j) = \cup_{k \leq j} \gamma (k)$ is clopen in $G$ for each $j <
  i$. Suppose for each $j < i$ we have $\gamma (j) = (\Pi_{\eta (j)}^{- 1}
  \Pi_{\eta (j)} (\kappa (j))) \setminus K (j)$ with $K (j)$ clopen in $G$ and
  $\kappa (j) < K (j)$. Suppose given $j, \kappa (j)$ and $K (j)$ the index
  $\eta (j)$ is minimal to ensure that $\gamma | [0, j]$ is increasing. Let $U
  (j) = (\Pi_{\eta (j)}^{- 1} \Pi_{\eta (j)} (\kappa (j)))$. We also assume
  for all $j < i$ that $\gamma (j) \cap A = \emptyset$ or $\gamma (j) \cap B =
  \emptyset$. If $j < i$ $W (j) = \cup_{k < j} \gamma (k)$ and we note by
  ${Remark} \ref{wclopen} {that} W (j)$ is clopen.
  
  Suppose $\gamma$ also satisfies the following 2 conditions.
  
  1) Suppose if $j < i$ then $\kappa (j) = min ((A \cup B) \setminus W (j))$,
  if $\kappa (j) \in A$ then $K (j) = V (\kappa (j), B (j))$ with $B (j) = B
  \setminus W (j)$, and if $\kappa (j) \in B$ then $K (j) = V (\kappa (j), A
  (j))$ with $A (j) = A \setminus W (j)$.
  
  2) The sets $V (\kappa (j), B (j))$ and $V (\kappa (j), A (j))$ are defined
  as in Theorem \ref{VaB}.
  
  Now the heart of the matter is understand why $\cup_{j < i} \gamma (j)$ is
  clopen, and for this we argue by contradiction. If $\cup_{j < i} \gamma (j)$
  is not closed apply Lemma \ref{ezinc} and let $c = sup (\cup_{j < i} \gamma
  (j))$. Our plan is to ultimately show there exists $j_A$ so that if $j_A
  \leq j$ then wolog $\kappa (j) \in A$, and $V (\kappa (j), B (j)) = V
  (\kappa (j_A), B (j_A)) = K (j_A)$. It will then follow directly from Lemma
  \ref{induct} that $\cup_{j < i} \gamma (j)$ is clopen, contradicting our
  assumption that $\cup_{j < i} \gamma (j)$ is not closed.
  
  By Lemma \ref{ezinc} obtain an increasing sequence $s_1 < s_2$....terminal
  in $[0, i)$. By Lemma \ref{ezinc} $\kappa (s_n) \rightarrow c$ and hence,
  since $im (\kappa) \subset A \cup B$ and since $A$ and $B$ are disjoint
  closed sets, eventually either $\kappa (s_n) \in A$ or $\kappa (s_n) \in B$.
  Thus wolog $c (s_n) \in A$ eventually. Observe if the increasing sequence
  $t_1 < t_2$.... is also terminal in $[0, i)$ then $\{s_n \}$ and $\{t_n \}$
  are interlaced and hence by Lemma \ref{ezinc} $\kappa (t_n) \rightarrow c$.
  Thus by Lemma \ref{seqcont} the extended function $\kappa | [0, i] = \kappa
  | [0, i) \cup \{i, c\}$ is continuous at $i$ with the order topology on $[0,
  i.]$. In particular there must exist $M$ so that if $s_M \leq j$ then
  $\kappa (j) \in A$, since otherwise we could manufacture a pair of
  interlaced increasing terminal sequences $\{s_n \}$ $\{t_n \}$ with $\kappa
  (s_n) \in A$ and $\kappa (t_n) \in B$ yielding the contradiction $c \in A
  \cap B$.
  
  Next we will show if $s_m \leq j$ then $B_{} (s_m) = B (j)$. Note if $j =
  s_m$ then $B (s_m) = B (j)$. Proceeding by transfinite induction, suppose
  $s_m < j < i$ and $B (s_m) = B (t)$ whenever $s_m \leq t < j$. By definition
  $B (j) = B \setminus W (j)$ and if $s_m \leq t < j$ then $B (t) = B
  \setminus W (t)$ and thus (since $W (t) \subset W (j)$) we have $B (j)
  \subset B (t)$, and in particular $B (j) \subset B (s_m)$. Conversely note
  $W (j) = W (s_m) \cup_{s_m \leq t < j} \gamma (t) = W (s_m) \cup (\cup_{s_m
  \leq t < j}  (U (t) \setminus B_{s_m}))$. By definition $B (s_m) = B
  \setminus W (s_m)$. Thus if $x \in B (s_m)$, then $x \notin W (j)$ and hence
  $x \in B (j) .$
  
  We have established that if $s_m \leq j < i$ then $K (j) = V (\kappa (j), B
  (s_M))$ with $\kappa (j) \in A$. Thus by Theorem \ref{VaB}, for each
  increasing terminal sequence $t_1 < t_2 ... < i$ there exists $M$ so that if
  $M \leq n < m$ then $V (c (t_n), B (t_n)) = V (\kappa (t_m), B (t_m))$. Thus
  if we define $f : [s_m, i) \rightarrow S$ as $f (j) = V (\kappa (j), B (j))$
  then $f| [s_m, j)$ is eventually constant by Lemma \ref{seqcont}. Hence
  there exists $j_A \in [s_m, i)$ so that if $j_A \leq j$ then $\gamma (j) = U
  (j) \setminus V (\kappa (j_A), B (s_M))$. It now follows from Lemma
  \ref{induct} that $\cup_{j < i} \gamma (j)$ is clopen after all.
  
  Now there are two cases.
  
  Case 1. If the the clopen set $W (i) = \cup_{j < i} \gamma (j)$ covers
  neither $A$ nor $B$, define $\kappa (i) =$ min$ ((A \cup B) \backslash W
  (i))$. If $\kappa (i) \in A$ define $K (i) = V (\kappa (i), B\backslash W
  (i$)) as in Theorem \ref{VaB}, define $B (i) = B\backslash W (i) .$If
  $\kappa (i) \in B$ define $K (i) = V (\kappa (i), A\backslash W(i)$) as in Theorem \ref{VaB} and define $A (i) = A\backslash W (i) .$
  
  Now apply Lemma \ref{big} and let $\eta (i)$ be minimal so that $W (i) <
  \Pi_{\eta (i)}$($\kappa (i)$). Define $U (i) = \Pi_{\eta (i)}^{- 1}
  \Pi_{\eta (i)} \kappa (i)$ and define $\gamma (i) = U (i) \backslash K (i)
  .$ Note, by definition, if $\kappa (i) \in A$ and $b \in B$satisfies $b <
  \kappa (i)$ then $b \in W (i)$. If $b \in B$ satisfies $\kappa (i) < b$ then
  $b \in K (i) $ . 
Thus $\gamma (j) \cap B = \emptyset .$ By a symmetric
  argument if $\kappa (i) \in B$ then $\gamma (j) \cap A =X \emptyset$.
  Thus replacing the index $i$ with $i + 1,$and defining $V (i) = W (i) \cup
  \gamma (i),$the transfinite induction hypothesis is preserved. Proceeding
  via transfinite induction we continue the construction in Case 1 until case
  2 is achieved.
  
  Case 2. If the clopen set $W (i) = \cup_{j < i} \gamma (j)$ covers $A$ or
  $B$ then wolog $A \subset W (i)$. Let $T_A = \{j < i| \kappa (j) \in A\}$.
  Let $U (A) = \cup_{j \in T_A} \gamma (j)$. We must show $U (A)$ is a clopen
  set such that $A \subset U (A)$ and $B \cap U (A) = \emptyset$, it will then
  follow that $U (A)$ and $G \setminus U (A)$ are disjoint clopen sets
  covering $A$ and $B$ respectively.
  
  That $U (A)$ is clopen follows from basic topology. Given any collection $R$
  of pairwise disjoint clopen sets in the space $X$, if the union is clopen in
  $X$, then the union taken over any subset $H \subset R$ will also be clopen
  in $X$. By hypothesis if $j < i$ then $j \in T_A$ iff $\gamma (j) \cap B =
  \emptyset .$ Thus $U (A)$ is a clopen set covering $A$ such that $U (A) \cap
  B = \emptyset .$
\end{proof}

\section{Applications}\label{apps}

The motivation for this paper is to prove (with the quotient topology
inherited from the space of based loops), that the Hawaiian earring group has
large inductive dimension zero. Starting from a combination of first
principles, the basics of finite free groups, and the knowledge that
${HE}$ is $\pi_1$ shape injective, we will ultimately reduce the question
of calculating the dimension of $\pi_1 ({HE}, p)$ to Theorem
\ref{mainapp}.

\

\begin{thm}
  \label{mainapp}Suppose for each $n \in \{ 1, 2, 3, \ldots . \}$, $X_n $is a
  discrete space. We assume $X_1 \subset X_2 \ldots .$ and for each $n$ the
  map $R_n : X_{n + 1} \rightarrow X_n$ is a retraction. Let the space
  $X_{\infty} = \lim_{\leftarrow} X_n$ \ denote the topological inverse limit,
  i.e. $X_{\infty}$ is the subspace of the countable product $X_1 \times X_2
  \times \ldots$ so that $(x_1, x_2, \ldots) \in X_{\infty}$ iff $R_n (x_{n +
  1}) = x_n$ for each $n.$
  
  Suppose furthermore we have a sequence of quotient maps $q_n : X_n
  \rightarrow G_n $ so that the formula $q_n R_n q_{n + 1}^{- 1} = r_n$
  induces a map such that $r_n q_{n + 1} = q_n R_n$, i.e. there is an induced
  map $r_n : G_{n + 1} \rightarrow G_n$, commuting with the retraction $X_{n +
  1} \rightarrow X_n .$Finally suppose the formula $q_{n + 1} q_n^{- 1}
  $induces an embedding $j_n : G_n \rightarrow G_{n + 1}$, commuting with
  inclusion $X_n \rightarrow X_{n + 1} .$
  
  By definition the maps $\{ q_n \}$ induce an equivalence relation on
  $X_{\infty}$ such that $(x_1, x_2, \ldots) \sim (y_1, y_2, \ldots)$ iff $q_n
  (x_n) = q_n (y_n)$ for each $n.$ Define $G$ as the corresponding topological
  quotient $q : X_{\infty} \rightarrow G$. Then $G$ has large inductive
  dimension 0.
\end{thm}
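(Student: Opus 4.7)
The plan is to reduce to Theorem \ref{gzero} by equipping $G$ with the retractions $\Pi_n : G \to G_n$ induced by the $q_n$, together with a carefully chosen linear order, and then verifying Axioms \ref{retracts}, \ref{lin}, and \ref{trap}. Axiom \ref{retracts} is essentially bookkeeping: $X_\infty$ is metrizable as a subspace of a countable product of discrete spaces, so $G$ is sequential as its quotient; the formula $\Pi_n([\xi]) := q_n(\xi_n)$ is well defined by the definition of $\sim$, continuous, and a retraction onto $G_n$ because $j_n$ commutes with $X_n \hookrightarrow X_{n+1}$; the identity $\Pi_{n-1}\Pi_n = \Pi_{n-1}$ is exactly the hypothesis $r_{n-1}q_n = q_{n-1}R_{n-1}$; pointwise convergence $\Pi_n(g)\to g$ follows from the convergence of eventually constant lifts in $X_\infty$; injectivity of $\phi$ is built into the definition of $\sim$; and $G_n$ is closed discrete in $G$ via the retraction $\Pi_n$ and the sequential Hausdorff argument that $g_k \in G_n$ with $g_k \to g$ forces $\Pi_n(g_k) = g_k$ to be eventually equal to $\Pi_n(g)$, so that $g = \Pi_n(g) \in G_n$.

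The core task is the construction of the linear order on $G$ satisfying Axioms \ref{lin} and \ref{trap}. The plan is to first well-order each discrete $X_n$ so that $X_n$ becomes an initial segment of $X_{n+1}$, inducing a lex-type linear order on $X_\infty$. The delicate step is to arrange these well-orderings level by level so that the $\sim$-equivalence classes form non-interlaced intervals of $X_\infty$, which then allows the quotient order on $G = X_\infty/\sim$ defined by ``$[\xi] < [\eta]$ iff $\xi < \eta$'' to be unambiguous; this is the unreduced approximation order from the introduction. Once this order is in place, the clauses of Axiom \ref{lin} follow: monotonicity $\Pi_n(g)\leq g$ and well-ordering of each $G_n$ are immediate from the construction; the ``closed sets have minima'' clause comes from assembling a canonical minimum coordinate by coordinate using the well-ordering of each $X_n$, sequentiality of $G$, and $\Pi_n(b)\to b$; and the convergence dichotomy holds because stabilization of the lex-representatives yields a genuine limit in $X_\infty$ equal to the supremum, while non-stabilization at some level $N$ forces every subsequence to wander through infinitely many discrete points of $X_N$ and hence diverge in the $T_2$ space $G$. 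Axiom \ref{trap} is the compatibility of $\Pi_N$-preimages with the order, which falls out because $k_1 < k_2 < k_3$ with canonical lifts of $k_1$ and $k_3$ agreeing on the first $N$ coordinates forces the lex-intermediate $k_2$ to agree there as well.

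The chief obstacle is arranging the well-orderings of the $X_n$ so that the $\sim$-equivalence classes form non-interlaced intervals of $X_\infty$; this is a global coherence requirement across infinitely many levels, and it is where the technical heart of Section \ref{apps} must sit. The hypotheses $r_n q_{n+1} = q_n R_n$ and that $j_n$ is an embedding are what make this coherence possible: the first permits adjusting a choice at level $n$ through $R_n$ while controlling the $q_{n+1}$-image, and the second ensures that $G_n$ sits cleanly inside $G_{n+1}$ so that the non-interlacing property can be propagated inductively. Once these well-orderings are in place the three axioms hold and Theorem \ref{gzero} immediately yields that $G$ has large inductive dimension $0$.
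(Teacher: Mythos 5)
Your high-level strategy coincides with the paper's: verify axioms \ref{retracts}, \ref{lin} and \ref{trap} and invoke Theorem \ref{gzero}, with the order on $G$ induced from a lexical order on $X_\infty$ obtained by recursively well ordering the $X_n$; your treatment of axiom \ref{retracts} matches the paper's. But the part you defer as ``the technical heart'' is the actual content of the proof, and the target you set for it is unattainable. You require the full $\sim$-classes to be non-interlaced intervals of $X_\infty$ so that the quotient order $[\xi]<[\eta]\Leftrightarrow\xi<\eta$ is unambiguous. In the motivating Hawaiian earring case this cannot happen: the classes of $x_1x_2$ and of $x_1x_2^{-1}$ each contain lifts lying over both $x_1$ and $x_1x_1^{-1}x_1$ in $X_1$, so if both classes were convex blocks, one entirely below the other, the retraction $R_1$ could not be order preserving --- yet monotonicity of the $R_N$ (that $x<y$ implies $R_N(x)\le R_N(y)$) is exactly what you need elsewhere, to make $\{\Pi_N(s_n)\}$ nondecreasing along increasing sequences (the convergence dichotomy in axiom \ref{lin}) and to make $R_N$-fibers convex (the verification of axiom \ref{trap}). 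The paper never asserts convexity of the $\sim$-classes; it only arranges convexity of the one-level-coarser fibers (if $x_n<y_n<z_n$ in $X_n$ and $q_{n-1}R_{n-1}(x_n)=q_{n-1}R_{n-1}(z_n)$, then $q_{n-1}R_{n-1}(y_n)=q_{n-1}R_{n-1}(x_n)$), and it defines the order on $G$ by pulling back along the minimal-representative section $\sigma(g)=\min q^{-1}(g)$, which is well defined with no interval structure on the classes. Without some such device your order on $G$ is simply not defined.

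A second, related problem is your prescription that ``$X_n$ becomes an initial segment of $X_{n+1}$.'' Placing all of $X_n$ at the bottom of $X_{n+1}$ does give $R_n\le\mathrm{id}|X_{n+1}$, but it destroys the refinement property above: one can then have $x<y$ in $X_{n+1}$ with $R_n(x)>R_n(y)$. What the paper actually does is order $X_{n+1}$ by lexical inspection of the triple $(R_n(x_{n+1}),q_{n+1}(x_{n+1}),x_{n+1})$, with $x_n$ minimal in $R_n^{-1}(x_n)\cap X_{n+1}$ and $q_{n+1}(x_n)$ minimal in $q_{n+1}(R_n^{-1}(x_n)\cap X_{n+1})$; this simultaneously yields $R_n\le\mathrm{id}$, monotonicity of $R_n$, and the level-wise convexity needed for axiom \ref{trap}. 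So the concrete gap is: the recursive triple-lexical construction, the $\sigma$-based definition of the order on $G$, and the verification that this weaker order still satisfies axioms \ref{lin} and \ref{trap} are all missing, and the stronger coherence property you propose in their place is false.
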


\begin{proof}
  Our strategy is to apply Theorem \ref{gzero} by first showing $G$ can be
  made to satisfy axioms \ref{retracts}, \ref{lin} and \ref{trap}. For axioms
  \ref{lin} and \ref{trap}, we must build a linear order on $X_{\infty}$ which
  induces a suitable linear order on $G$. This is ultimately straightforward,
  but with a few restrictions imposed by the starting data $\{ q_n \},$ the
  need to ensure axiom \ref{trap}, and the need to ensure that $R_n \leq
  {id} |X_{n + 1} .$ To define a lexical order on $X_{\infty}$ it
  suffices to proceed recursively, first defining a well ordering of $X_1,$
  then extending to a well ordering of $X_2$ and so on.
  
  To impose a well ordering on $X_1 $, first arbitrarily well order $G_1$,
  and \ then arbitrarily well order each point primage under $q_1 : X_1
  \rightarrow G_1$.
  
  Now we define on $X_1$ a kind of local lexical order as follows. To compare
  two points $\{ x_1, y_1 \} \subset$ $X_1,$ if $q_1 (x_1) \neq q_1 (y_1)$ let
  the order in $G_1$ determine which is bigger. If $q_1 (x_1) = q_1 (y_1)$ let
  the order on point preimages of $q_1$ decide which is bigger. Crucially if
  $q_1 (y_1) \neq q_1 (x_1)$ and $q_1 (x_1) = q_1 (z_1)$ then $y_1 < \{ x_1,
  z_1 \}$ or $y_1 > \{ x_1, z_1 \}$.
  
  To extend the well ordering of $X_1$ to $X_2$ we begin as follows.
  First, for each $x_1 \in X_1 $define $X_2 (x_1) = X_2 \cap R_1^{- 1}
  (x_1),$and note $x_1 \in X_2 (x_1)$ since $X_1 \subset X_2$ and $R_1 (x_1) =
  x_1 .$ Next, define $G_2 (x_1) = q_2 (X_2 (x_1))$ and note $G_2 (x_1)
  \subset G_2 .$ Now well order $G_2 (x_1)$ to have minimal element $q_2
  (x_1)$ and otherwise the well ordering of $_{} G_2 (x_1)$ is arbitrary.
  Next, well order each point preimage of the map $q_2 |X_2 (x_1) : X_2 (x_1)
  \rightarrow G_2 (x_1)$ subject only to the constraint that $x_1 = \min q_2
  (x_1) .$ Thus $x_1 = \min \{ X_2 (x_1) \}$.
  
  To complete the definition of the well ordering on $X_2 $ suppose we
  are,given distinct points $\{ x_2, y_2 \} \subset X_2$. If
  $R_1 (x_2) \neq R_1 (y_2)$ we require the order of $X_1$ to dictate which is
  bigger. If $R_1 (x_2) = R_1 (y_2)$ and $q_2 (x_2) \neq q_2 (y_2)$ we require
  the order of $G_2 (R_1 (x_2))$ to dictate which is bigger. If $R_1 (x_2) =
  R_1 (y_2)$ and $q_2 (x_2) = q_2 (y_2)$ we require the order on $q_{} (x_2)$
  to dictate which is bigger. In summary, lexical inspection of the ordered
  triples $(R_1 (x_2), q_2 (x_2), x_2)$ and $(R_1 (y_2), q_2 (y_2), y_2)$
  determines which of $\{ x_2, y_2 \}$ is bigger.
  
  Crucially, if $\{ x_2, y_2, z_2 \} \subset X_2$ and $x_2 < y_2 < z_2$ and
  $q_1 R_1 (x_2) = q_1 R_1 (z_2),$ then $q_1 R_1 (x_2) = q_1 R_1 (y_2)$, and
  we argue the contrapositive as follows. Suppose $q_1 R_1 (x_2) \neq q_1 R_1
  (y_2)$ and $q_1 R_1 (x_2) = q_1 R_1 (z_2)$ with $\{ x_2, y_2, z_2 \} \subset
  X_2$. Let $x_1 = q_1 (x_2)$ and $y_1 = q_1 (y_2)$ and $z_1 = q_1 (z_2)$. As
  noted we must have $y_1 < \{ x_1, z_1 \}$ or $y_1 > \{ x_1, z_1 \}$, and
  hence by definition $y_2 < \{ x_2, z_2 \}$ or $y_2 > \frac{}{} \{ x_2, z_2
  \}$. Finally note $R_1 \leq {id} |X_2 .$
  
  Proceeding recursively, suppose $X_{n - 1}$ has been well ordered so that if
  $q_{n - 1} (x_{n - 1}) \neq q_{n - 1} (y_{n - 1}) $ and $q_{n - 1} (x_{n -
  1}) = q_{n - 1} (z_{n - 1})$ ${then} y_{n - 1} < \{ x_{n - 1}, z_{n -
  1} \}$, or $y_{n - 1} > \{ x_{n - 1}, z_{n - 1} \}$. Suppose $R_{n - 2}
  \leq {id} |X_{n - 1} .$
  
  For each $x_{n - 1} \in X_{n - 1}$ define $X_n (x_{n - 1}) = X_n \cap R_{n -
  1}^{- 1} (x_{n - 1})$, and note $x_{n - 1} \in X_n (x_{n - 1})$ since $X_{n
  - 1} \subset X_n$ and $R_{n - 1} (x_{n - 1}) = x_{n - 1} .$ Next, define
  $G_n (x_{n - 1}) = q_n (X_n (x_{n - 1}))$ and note $G_n (x_{n - 1}) \subset
  G_n .$ Now well order $G_n (x_{n - 1})$ to have minimal element $q_n (x_{n -
  1})$ and otherwise the well ordering of $_{} G_n (x_{n - 1})$ is arbitrary.
  Next, well order each point preimage of the map $q_n |X_n (x_{n - 1}) : X_n
  (x_{n - 1}) \rightarrow G_n (x_{n - 1})$ subject only to the constraint that
  $x_{n - 1} = \min q_n (x_{n - 1}) .$
  
  To complete the definition of the well ordering on $X_n $ suppose we
  are,given ${distinct} {points} \{ x_n, y_n \} \subset X_n$. If
  $R_{_{} n - 1} (x_n) \neq R_{n - 1} (y_n)$ we require the order of $X_{n -
  1}$ to dictate which is bigger. If $R_{n - 1} (x_n) = R_{n - 1} (y_n)$ and
  $q_n (x_n) \neq q_n (y_n)$ we require the order of $G_n (R_{n - 1} (x_n))$
  to dictate which is bigger. If $R_{n - 1} (x_n) = R_{n - 1} (y_n)$ and $q_n
  (x_n) = q_n (y_n)$ we require the order on $q_{} (x_n)$ to dictate which is
  bigger. In summary, lexical inspection of the ordered triples $(R_{n - 1}
  (x_n), q_n (x_n), x_n)$ and $(R_{n - 1} (y_n), q_n (y_n), y_n)$ determines
  which of $\{ x_n, y_n \}$ is bigger.
  
  Crucially, if $\{ x_n, y_n, z_n \} \subset X_n$ and $x_n < y_n < z_n$ and
  $q_{n - 1} R_{n - 1} (x_n) = q_{n - 1} R_{n - 1} (z_n),$ then $q_{n - 1}
  R_{n - 1} (x_n) = q_{n - 1} R_{n - 1} (y_n)$, and we argue the
  contrapositive as follows. Suppose $q_{n - 1} R_{n - 1} (x_n) \neq q_{n - 1}
  R_{n - 1} (y_n)$ and $q_{n - 1} R_{n - 1} (x_n) = q_{n - 1} R_{n - 1} (z_n)$
  with $\{ x_n, y_n, z_n \} \subset X_n$. Let $x_{n - 1} = q_{n - 1} (x_n)$
  and $y_{n - 1} = q_{n - 1} (y_n)$ and $z_{n - 1} = q_{n - 1} (z_n)$. By the
  induction hypothesis we must have $y_{n - 1} < \{ x_{n - 1}, z_{n - 1} \}$
  or $y_{n - 1} > \{ x_{n - 1}, z_{n - 1} \}$, and hence, appealing to our
  local definition, $y_n < \{ x_n, z_n \}$ or $y_n > \frac{}{} \{ x_n, z_n
  \}$. Finally note $R_{n - 1} \leq {id} |X_n .$
  
  To check that axiom \ref{retracts} holds, note the map $X_n \rightarrow
  X_{\infty}$ sending $x_n$ to $(x_1, \ldots, x_n, x_n, \ldots)$ is \ a
  topological embedding, henceforth we conflate the discrete space
  $X_n$ with the corresponding discrete subspace of eventually
  constant sequences in$ X_{\infty}$, the sequences whose
  terms coincide from index $n$ upward. Thus, with moderate abuse of notation,
  we extend the map $R_N |X_{N + 1}$ canonically to $R_N : X_{\infty}
  \rightarrow X_N$, so that $R_N (x_{1,} x_2, \ldots x_N, x_{N + 1}, \ldots .)
  = (x_{1,} x_2, \ldots x_N, x_N, \ldots)$, and note $R_N = R_N R_{N + 1} .$
  
  By definition a point $g \in G$ is a subspace $g \subset X_{\infty}$ so
  that distinct points $\{ x, y \} \subset g$ have $q_n$ equivalent
  coordinates for each $n$,  and a point $g_n \in G_n$ is a subspace $g_n \in
  X_n$. Our starting assumptions ensure we have a well defined function $G_n
  \hookrightarrow G$ sending $g_n \in G_n$ to $(\ldots .r_{n - 1} (g_n), g_n,
  j_n (g_n)_{}, j_{n + 1} j_n (g_n) \ldots) \in G$. This is a topological
  embedding, and henceforth we conflate $G_n$ with the corresponding subspace
  of $G$. The map $q_n R_n : X_{\infty} \rightarrow G_n$ is constant on sets
  of the form $q^{- 1} (g)$ and thus there is a unique induced map $\Pi_n : G
  \rightarrow G_n$ such that $\Pi_n = \Pi_n \Pi_{n + 1} .$ The maps $\{ \Pi_n
  \}$ determine a continuous injection $\phi : G \rightarrow \lim_{\leftarrow}
  G_n$.
  
  If $\phi$ were a topological embedding then it would follow easily that
  ${dimG} = 0$, since $\phi$ embeds $G$into the 0 dimensional metric
  space $G_1 \times G_2 \ldots$However in general $\phi$ is NOT a topological
  embedding {\cite{FA0}}. \
  
  Note each space $G_n$ is discrete, since topological quotients of discrete
  spaces are discrete. Moreover $G$is a quotient of a metrizable space and
  hence $G$is sequential. Thus axiom \ref{retracts} will hold provided we can
  show pointwise convergence $\Pi_n \rightarrow {id} |G.$ The latter
  claim holds by definition, shown as follows. Given $g \in G$ and an open $U
  \subset G$ so that $g \in U$, lift $g$ to some $(x_1, x_2, \ldots) \in q^{-
  1} (g) \subset X_{\infty}$ and obtain a basic open $V \subset q^{- 1} (U)
  \subset X_{\infty}$ with $(x_1, x_2, \ldots) \in V$ so that $V = \{ x_1 \}
  \times \{ x_2 \} \ldots \times \{ x_N \} \times X_{N + 1} \ldots .$Now given
  $n \geq N$ to check $\Pi_n (g) \in U$ it suffices to check, since $q :
  X_{\infty} \rightarrow G$ is a quotient map, that some lift of $\Pi_n (g)
  \in U$. Our special point $(x_1, x_2, \ldots) \in V$ suffices. Thus axiom
  \ref{retracts} holds.
  
  The space $\lim_{\leftarrow} X_n = X_{\infty}$ is a topological inverse
  limit of discrete spaces under retraction bonding maps $R_n |X_{n + 1} .$ We
  have well ordered each set $X_n $ so that $R_n (x_{n + 1}) \leq x_n
  $for $x_{n + 1} \in X_{n + 1} .$ Thus with the induced lexigraphic order on
  $X_{\infty}$ we have $R_n (x) \leq x$for $x \in X_{\infty} .$
  
  The space $X_{\infty} $ has the topology of pointwise convergence, and
  thus, since $X_n$ is a discrete space, a sequence $\{ s_n \} \subset
  X_{\infty}$ converges iff $\Pi_N (s_n)$ is eventually constant for each $N$.
  Thus, if the strictly increasing sequence $s_1 < s_2 \ldots \subset
  X_{\infty}$\quad diverges, then every subsequence of $\{ s_n \}$ diverges,
  since $\{ \Pi_N (s_n) \}$ is nondecreasing for each $N.$ Conversely, if the
  increasing sequence $\{ s_n \} \subset X_{\infty}$ converges, then, since
  $\{ \Pi_N (s_n) \}$ is nondecreasing and eventually constant for each $N,$
  $\lim \{ s_n \} = \sup \{ s_n \}$.
  
  Since each set $X_n$ is well ordered, and since no well ordered set admits
  a strictly decreasing sequence with infinitely many terms, each strictly
  decreasing sequence $s_1 > s_2 \ldots . \subset X_{\infty}$ converges.
  Consequently if $B \subset X_{\infty}$ is nonempty and closed, then $\min
  (B)$ exists. To see why, let $b_1 = \min \Pi_1 (B)$. Let $b_2 = \min \Pi_2
  \Pi_1^{- 1} (b_1) .$ Let $b_3 = \min \Pi_3 \Pi_2^{- 1} (b_2)$ and so on.
  Note $b_1 \geq b_2 \ldots$ and let $b = \lim \{ b_n \}$. Thus $b \in B$
  since $B$ is closed .Note $b \leq a$ for each $a \in B$
  and thus $b = \min (B)$.
  
  Note $G$ is $T_2 $ by Remark \ref{gt2} and in particular $G$ is $T_1$ .
  Thus point preimages are closed under the map $q : X_{\infty} \rightarrow
  G$. Define $\sigma : G \rightarrow X_{\infty}$ as $\sigma (g) = \min (q^{-
  1} (g))$. Since $\sigma$is one to one and since subsets of linearly ordered
  sets are linearly ordered, we obtain a linear order on $G$ defined so that
  $g < h$ iff \ $\sigma (g) < \sigma (h) .$ In particular the sequence $\{ g_n
  \}$ is strictly increasing in $G$ iff $\{ \sigma (g_n) \}$ is strictly
  increasing in $X_{\infty}$. In the process of checking axiom \ref{lin} we
  will show $\sigma$ is left continuous but not right continuous.
  
  First we observe some basic properties of our definition of the linear order
  on the set $X_{\infty}$, conflating $X_N$ with the subspace of $X_{\infty}$,
  all of whose terms are constant from index $N$ and above. If $x = (x_1, x_2,
  \ldots .) \in X_{\infty}$ then $x_n \leq x_{n + 1} \leq x,$and
  $x_n \rightarrow x,$ and $x = \sup \{ x_n \} .$
  
  Next we observe a basic property of $\sigma$. If $(y_1, y_2, \ldots .) = y
  < z = \sigma (g) = (z_1, z_2, \ldots) \in X_{\infty}$ and if $N$ is minimal
  so that $y_N \neq z_N$, then $y_N < z_N$, $\min (q_N (y_N)) < z_N = \min
  (q_N (z_N))$ and hence $q_N (y_N) < q_N  (z_N) .$ \
  
  To check axiom \ref{lin}, if $A \subset G$ is closed, then $q^{- 1} (A) =
  B$ is closed in $X_{\infty}$ and thus ${minA} = q ({minB})$ and in
  particular $A$ has a minimal point. Suppose $\{ g_n \}$ is a strictly
  increasing sequence in $G$.
  
  Case 1. Suppose the corresponding increasing sequence $\{ \sigma (g_n) \}$
  converges to some $x = (x_{1,} x_2, \ldots) \in X_{\infty}$. Note the
  sequences $\{ \sigma (g_n) \}$ and $\{ x_n \}$ are interlaced.
  
  Sequential continuity of $q$ at $x$ shows $\{ g_n \}$ converges to some $q
  (x) = g \in$ $G$. We will show $x = \sigma (g)$ and $g = \sup \{ g_n \} .$
  
  Suppose $y = (y_1, y_2, \ldots) \in X_{\infty}$ with $y < x$. Obtain $N$
  minimal so that $y_N < x_N .$ Obtain $M$ minimal so that $R_N (\sigma (g_M))
  = x_N .$ Note $R_n (y) = R_n (x) = R_n (\sigma (g_M))$ if $n < N$. Thus $q_N
  (y_N) < = q_N (\sigma (g_M)) = q_N (x_N)$ and hence $q (y) \neq q (x)$. This
  shows $x = \min (q (x))$, i.e. $x = \sigma (h)$ for some $h \in G$. However,
  by definition $q \sigma = {id} |G$, and thus $g = q (x) = q \sigma (h)
  = h$. Hence $x = \sigma (g)$. This argument also shows if $\sigma (k) = y <
  x = \sigma (g) $then$ y < \sigma (g_M)$ and hence $g = \sup \{ g_n \}
  .$
  
  Case 2. If the increasing sequence $\{ \sigma (g_n) \}$ divereges in
  $X_{\infty}$ then every subsequence of $\{ \sigma (g_n) \} $ diverges in
  $X_{\infty}$ and hence, since $q$ is a quotient map, every subsequence of
  $\{ g_n \}$ diverges in $G$.
  
  The injection $\sigma |G_n$ determines that $G_n$ is well ordered. Now we
  must check that $\Pi_n \leq \Pi_{n + 1} \leq {id} |G$ while
  keeping in mind that $G$ does NOT in general have the lexigraphic order.
  That is to say, it can happen that $g < h$ in $G$ but $\Pi_n (g) \geq
  \Pi_n (g)$ with $n$ minimal so that \ $\Pi_n (g) \neq \Pi_n (g)$.
  
  Given $g \in G$ let $x = \sigma (g) \in X_{\infty}$ and let $x_n = R_n (x)
  = R_n (R_{n + 1} (x)) = R_n (x_{n + 1})$ with $x_{n + 1} = R_{n + 1} (x) .$
  Our recursive definition of the linear order on $X_{n + 1}$ ensures $\min
  (q_n (x_n)) \leq \min (q_{n + 1} (x_{n + 1}))$ and inclusion $G_n
  \rightarrow G$ ensures $\min (q_{n + 1} (x_{n + 1})) \leq \min (q (x))$
  and hence \ $\Pi_n \leq \Pi_{n + 1} \leq {id} |G$. Thus axiom
  \ref{lin} holds.
  
  To check axiom \ref{trap} we first decode the notion of blowups in the
  context of $X_{\infty}$. By definition $G_{\infty} = G_1 \cup G_2 \ldots$
  and $\sigma (G_{\infty}) \subset X_{\infty}$. Given $x \in \sigma
  (G_{\infty})$ let $x = \sigma (g)$, and obtain $N$ minimal so that $g_{} \in
  G_N$. Thus $x \in X_N$ with $R_{N - 1} (x) < R_N (x) = R_{N + k} (x) .$ Thus
  $x_N \in X_N \backslash X_{N - 1}$ and hence by definition of our linear
  order on $X_N$ we have $x = \min (q_N (x))$. By definition ${blowup}
  (g) = \Pi_N^{- 1} \Pi_N (g) = \Pi_N^{- 1} (g)_{} \subset G.$ The pullback in
  $X_{\infty}$ is precisely $q^{- 1} (g)$.
  
  The lexigraphic order on $X_{\infty}$ ensures the following lifted version
  of axiom \ref{trap} holds. Suppose $x_1 < x_2 < x_3 $ with $x_n \in X_{N_n}$
  and $N_n$ minimal so that the mentioned membership holds, but also so that
  $R_{N_3}^{^{- 1}} (x_3) \subset R_{N_1}^{^{- 1}} (x_1)$. Then $R_{N_2}^{^{-
  1}} (x_2) \subset R_{N_2}^{^{- 1}} (x_2)$. To see why, note $R_{N_1} (x_3) =
  x_1$ and hence by definition of lexigraphic order we have $R_{N_1} (x_2) =
  x_1$.
  
  Thus given points $k_1 < k_2 < k_3$ in $G_{\infty}$ so that ${blowup}
  (k_3) \subset {blowup} (k_1)$, let $x_n = \sigma (k_n)$ and deduce
  ${blowup} (k_2) \subset {blowup} (k_1)$. Hence axiom \ref{trap}
  holds.
\end{proof}

\

\begin{corollary}
  \label{he}The Hawaiian earing ${HE}$is a subpace of the plane, the
  union of a sequence circles centered at $(0, 1 / n)$ with radius $1 / n$ and
  sharing the common point $(0, 0)$. The Hawiian earring group $G$ is the
  fundamental group of ${HE}$, the set of path components of the spaced
  of based loops in ${HE}$, with group operation cancatanation. Endowed
  with the quotient topology inherited from the space of based loops in
  ${HE}$, the Hawaiian earring group has large inductive dimension 0.
\end{corollary}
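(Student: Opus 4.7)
The plan is to realize $G = \pi_1(HE, p)$ as the quotient of a topological inverse limit of discrete spaces in the precise sense required by Theorem \ref{mainapp}, and then invoke that theorem. I would take $G_n$ to be the free group $F_n$ on the first $n$ generators, viewed as a discrete space, with $r_n : F_{n+1} \to F_n$ the canonical retraction killing $x_{n+1}$ and $j_n : F_n \hookrightarrow F_{n+1}$ the canonical inclusion. For $X_n$ I would take a sufficiently rich discrete space of finite combinatorial path data in the bouquet $B_n$ of $n$ circles --- concretely, the set of finite (not necessarily reduced) words in the alphabet $\{x_1^{\pm 1}, \ldots, x_n^{\pm 1}\}$ --- with $q_n : X_n \to F_n$ the standard word reduction, inclusion $X_n \hookrightarrow X_{n+1}$ tautological, and retraction $R_n : X_{n+1} \to X_n$ deleting every occurrence of $x_{n+1}^{\pm 1}$. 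Checking that $r_n q_{n+1} = q_n R_n$ and that $j_n$ commutes with the inclusion $X_n \hookrightarrow X_{n+1}$ is routine from the definitions.

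Next I would build the bridge to the loop space. Given a based loop $\alpha \in \Omega(HE, p)$, its projection to $B_n$ (obtained by collapsing the circles of index greater than $n$ to $p$) is homotopic to a unique reduced word in $F_n$; reading off the actual combinatorial path traced refines this to an element of $X_n$, and the resulting assignment $\Psi(\alpha) \in X_\infty := \lim_{\leftarrow} X_n$ is the natural candidate for a lift of the loop-space quotient. The claim I would need is that the composite $q \circ \Psi : \Omega(HE, p) \to G$ agrees with the usual loop-space quotient defining $\pi_1(HE, p)$. Well-definedness on equivalence classes and injectivity of the induced map $\phi : G \to \lim_{\leftarrow} F_n$ of axiom \ref{retracts} are provided by $\pi_1$-shape injectivity of the Hawaiian earring (\cite{DeSmit}, \cite{MM}, \cite{FZ}).

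The main obstacle will be identifying the topologies: namely, that the quotient topology $G$ inherits from $X_\infty$ (as in Theorem \ref{mainapp}) coincides with the quotient topology on $\pi_1(HE, p)$ coming from $\Omega(HE, p)$. Both topologies are sequential, so it is enough to compare convergent sequences. One direction is easy: continuity of each projection $HE \to B_n$, together with discreteness of $F_n$, forces $\Pi_n \Psi(\alpha_k)$ to stabilize for each $n$ when $\alpha_k \to \alpha$ uniformly, which is exactly convergence in $X_\infty$. The harder direction is lifting: given a convergent sequence in $X_\infty$, one must assemble the compatible word data into uniformly convergent loops in $\Omega(HE, p)$, and this is where the null-sequence property of the defining circles of HE (their diameters tend to zero) is used --- contributions from letters $x_k^{\pm 1}$ with $k$ large correspond to arcs of vanishingly small diameter and so cost nothing in the uniform metric. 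Once this topological identification is in place, axioms \ref{retracts}, \ref{lin} and \ref{trap} are supplied by Theorem \ref{mainapp}, Theorem \ref{gzero} applies, and we conclude that $G$ has large inductive dimension zero.
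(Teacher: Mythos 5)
Your setup coincides with the paper's: $X_n$ is the discrete set of unreduced words in $x_1^{\pm 1},\ldots,x_n^{\pm 1}$, $G_n$ the free group with $q_n$ the reduction map and $R_n$ letter deletion, and the corollary is reduced to Theorem \ref{mainapp} by identifying the loop-space quotient topology with the $X_\infty$-quotient topology; comparing convergent sequences is legitimate since both are quotients of metrizable spaces and hence sequential. The gap is in the direction you call easy. You argue that uniform convergence $\alpha_k\to\alpha$ forces $\Pi_n\Psi(\alpha_k)$ to stabilize for each $n$, and that this ``is exactly convergence in $X_\infty$.'' It is not: stabilization of every projection only gives convergence of the image in $\lim_{\leftarrow}G_n$, and the injection $\phi:G\to\lim_{\leftarrow}G_n$ is \emph{not} a topological embedding. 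The paper's own motivating example makes the point: for $g_k=(x_1x_kx_1^{-1}x_k)^k$ every projection $\Pi_N(g_k)$ is eventually the identity, yet $\{g_k\}$ diverges in the quotient topology. So stabilization of the finite-rank projections does not imply convergence in $G$, and your argument for this direction proves too little; in fact this is the hard direction, not the easy one.

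What is missing is a compactness statement. The paper's Lemma \ref{convcrit} characterizes convergence in $G$: each $\Pi_N(g_k)$ must be eventually constant \emph{and} the reduced-word lifts $\{\sigma(g_k)\}\subset X_\infty$ must have compact closure, and the second condition is where the work lies. For a uniformly convergent sequence of loops it is supplied by an Ascoli argument: the tightened loops $W(f_k)$ form an equicontinuous family, and every loop obtained from their projections to ${HE}_N$ by cancelling inessential subloops has equicontinuity data no worse, so all the relevant lifts live in a single compactum. Your proposal needs this (or an equivalent) step; without it the identification of the two topologies, and hence the appeal to Theorem \ref{mainapp}, does not go through. A smaller point: your $\Psi$ (reading the combinatorial itinerary off a loop) is discontinuous, as is the paper's weak-tightening retraction $W$, so $q\circ\Psi$ cannot be treated as a composite of continuous maps; the paper invokes Lemma \ref{ezretract} to see that the composite is nonetheless a quotient map, and your argument should address the same issue. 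Your second direction (assembling convergent word data into uniformly convergent loops via the null-sequence property) and your use of $\pi_1$-shape injectivity to identify point preimages with path components are both correct and match the paper.
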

  
  \begin{proof}
    Let $L (S^1, 1, {HE}, p)$ denote the space of based loops in
    ${HE} .$ Note $L (S^1, 1, {HE}, p)$ is a separable metric space
    with the uniform metric topology ( equivalent in this case to the compact
    open topology), since both $S^1$ and ${HE}$ are compact metric
    spaces. Our plan is to manufacture a space $G$ as in the hypothesis of
    Theorem \ref{mainapp}, and build a quotient map $F : L (S^1, 1, {HE},
    p) \rightarrow G$ whose point preimages are precisely the path components
    of $L (S^1, 1, {HE}, p) .$ Consequently, by basic general topology,
    there is an induced homeomorphism $h : \pi_1 ({HE}, p) \rightarrow
    G$, and hence both spaces have the same dimension.
    
    If $p \in {HE}$ is the interesting point, given an aribtrary based
    loop $f \in L (S^1, 1, {HE}, p)$, for each component $J \subset f^{-
    1}  ({HE} \backslash p)$, we can homotopically tighten $f| \bar{J}$
    within its image to a linearly parameterized loop or to a constant. Since
    $f$ is uniformly continous, the union of the tightenings $f_{{wt}}$
    is continuous and path homotopic to $f$, and we call the resulting map
    $f_{{wt}}$ weak tight. Thus, a loop $f_{{wt}} \in L (S^1, 1,
    {HE}, p)$ is weak tight provided $f|J$ is linear and one to one, for
    each component $J \subset f^{- 1}_{{wt}}  ({HE} \backslash p)$.
    Notice the weak tight loops ${WT} (S^1, 1, {HE}, p)$ comprise a
    closed subspace of $L (S^1, 1, {HE}, p)$, since being not weak tight
    is an open property for loops in $L (S^1, 1, {HE}, p) .$
    
    If H denotes the group of orientation preserving homeomorphisms of $S^1$
    which fix $1$, then $H$acts isometrically on ${WT} (S^1, 1,
    {HE}, p)$ via right composition, i.e. $h \in H$ sends $f_w \in
    {WT} (S^1, 1, {HE}, p)$ to the map $f_w h.$ Thus by Lemma
    \ref{quotient} the quotient ${WT} (S^1, 1, {HE}, p) /
    \overline{H ({WT} (S^1, 1, {HE}, p))}$ is metrizable. For
    convenience rename the mentioned quotient space $X_{\infty} $and the
    quotient map $Q_w : {WT} (S^1, 1, {HE}, p) \rightarrow
    X_{\infty}$. A typical point of $X_{\infty}$ is a weak tight path up to
    monotone orientation preserving reparameterization, i.e. two weak tight
    paths are equivalent if they pass through the same points in the same
    order.
    
    Let ${HE}_n \subset {HE}$ denote the bouquet of the first $n$
    loops. Thus if we define $X_n \subset X_{\infty}$ as the subspace with
    image in ${HE}_n$ we have an induced retraction $R_n : X_{\infty}
    \rightarrow X_n$ deleting all large index loops. Crucially notice $X_n$ is
    the discrete monoid on $n$ letters $\{ x_1, x_1^{- 1}, \ldots x_n^{- 1}
    \}$, with the empty word corresponding to the constant loop at $p$, and
    $X_{\infty} = \lim_{\leftarrow} X_n $with bonding map $R_n |X_{n + 1} .$
    Thus we can think of points of $X_{\infty}$ as unreduced infinite words in
    $\{ x_1, x_1^{- 1}, x_{2, \ldots} \}$ so that each letter appears finitely
    many times.
    
    Now let $G_n $ denote with the discrete topology, the free group on $n$
    letters $\{ x_1, \ldots x_n \}$ and let $q_n : X_n \rightarrow G_n$ denote
    the canonical quotient map. Note the maps $\{ q_n \} $induce an
    equivalence relation on $X_{\infty}$: two infinite words $w \in
    X_{\infty}$ and $v \in X_{\infty}$ are equivalent iff for all $n$ $q_n R_n
    (w)$=$q_n R_n (w) \in G_n .$ \ Let $q : X_{\infty} \rightarrow G$ denote
    the corresponding quotient map determined by this equivalence relation.
    
    The previous paragraphs establish a composition of functions $L (S^1, 1,
    {HE}, p) \rightarrow {WT} (S^1, 1, {HE}, p) \rightarrow
    X_{\infty} \rightarrow G$. The first arrow is a discontinous retraction,
    the second and third arrows are continuous quotient maps, and we let F
    denote the composition $L (S^1, 1, {HE}, p) \rightarrow G.$ By
    definition $\pi_1 ({HE}, p)$ is the quotient of $L (S^1, 1,
    {HE}, p)$ modding out by the path components. Thus, to prove the
    existence of an induced homeomorphism $h : \pi_1 ({HE}, p)
    \rightarrow G$, it suffices, by basic general topology, to show that $F$
    is a quotient map whose point preimages are the path components of $L
    (S^1, 1, {HE}, p) .$ Let $W : L (S_1, 1, {HE}, p) \rightarrow
    {WT} (S^1, 1, {HE}, p)$ denote the discontinuous retraction
    described previously.
    
    To check continuity of $F$ suppose $f_n \rightarrow f$ uniformly in $L
    (S^1, 1, {HE}, p)$. We apply Lemma \ref{convcrit} to the sequence $\{
    F (f_n) \}$ and first show $\{ \Pi_N (F (f_n)) \}$ is eventually constant
    for each $N$. Let $\kappa_N : {HE} \rightarrow {HE}_N$ denote
    the canonical retraction, notice locally at $f,$ the composition $\{
    \kappa_N (f_n) \}$ eventually preserves the homotopy path class of $\{
    \kappa_N (f) \}$ in ${HE}_N .$ Thus $\{ \Pi_N (F (f_n)) \}$ is
    eventually constant. To check that $\{ \sigma (F (f_n)) \}$ has compact
    closure we will apply Ascoli's Theorem. First note the map $W$ preserves
    or improves \ equicontinity data (and the image of $1 \in S^1$ is constant
    and thus convergent), and hence $\{ W (f_n) \}$ has compact closure in
    ${WT} (S^1, 1, {HE}, p)$.
    
    The following definition has an algebraic analogue, the different ways
    that one might start with an unreduced word in $X_N$ and then cancel
    inverse pairs to create the irreducible representive. Given a weak tight
    loop $\beta \in {WT} (S^1, 1, {HE}, p)$ and a natural number
    $N$, define $\Sigma (\beta, N) \subset {WT} (S^1, 1, {HE}, p)$
    as the subspace of irreducicle loops in ${HE}_N$, obtained by
    starting with $\kappa_N (\beta)$ and deleting successive nonconstant $p$
    based inessential loops, replacing each with the constant map $p.$
    
    The important observation is that each loop in $\Sigma (\beta, N)$ has
    equicontinuity data no worse than that of $\beta .$ Thus, since $\{ W
    (f_n) \}$ has compact closure, the union over $N$ and $n$of the subspaces
    $\Sigma (W (f_n), N)$ has compact closure in ${WT} (S^1, 1,
    {HE}, p)$. Call the latter compactum $C$, recall Lemma \ref{convcrit}
    and observe $\{ \sigma F (f_n) \} \subset C$. Thus $F$ is continous by
    Lemma \ref{convcrit}. Since ${qQ}_w$ is a quotient map, and since the
    (discontinous) map $W$ is a retraction, it follows from Lemma
    \ref{ezretract} that $F$ is a quotient map.
    
    To see that point preimages of $F$ are precisely the path components of $L
    (S^1, 1, {HE}, p),$ first note $W (\beta)$ is path homotopic in
    ${HE}$ to $\beta$. Thus if $\alpha$ and $\beta$ are path homotopic in
    HE then $W (\alpha)$ and $W (\beta)$ are path homotopic in ${HE}$.
    Thus $q_n R_n (Q_w W (\alpha)) = q_n R_n (Q_w W (\beta))$ for all $n$ and
    hence $F (\alpha) = F (\beta) .$ Conversely, since the Hawaiian earring is
    $\pi_1$ shape injective, if $\alpha$ and $\beta$ are not path homtopic in
    ${HE}$ then $q_n R_n (Q_w W (\alpha)) \neq q_n R_n (Q_w W (\beta))$
    ofr some $n$ and hence $F (\alpha) \neq F (\beta) .$ Thus $\pi_1
    ({HE}, p)$ with the quotient topology, is homeomorphic to $G$. It
    follows from Theorem \ref{mainapp} that $G$ has large inductive dimension
    zero, and hence $\pi_1 ({HE}, p)$ has large inductive dimension zero.

  \end{proof}

\section{Miscellaneous}

\begin{lem}
  \label{quotient}Suppose $(X, d)$ is a metric space and $H$is a group (under
  function composition) of isometries of $X .$ Then the orbit closures under
  the action forms a partition of $X$ and the Hausdorff metric is compatible
  with the quotient topology. (It is not necessary to assume $X$ is complete,
  that the orbits are bounded, or that the action is free.) Given $x \in
  X$define $C (x) = \overline{\{ H (x) \}_{} .}$ Thus $C (x)$ is a typical
  element of the quotient space. With moderate abuse of notation we denote the
  quotient space $X / \overline{H (X)}$).
\end{lem}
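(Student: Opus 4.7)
The plan is to first establish that orbit closures partition $X$, then compute the Hausdorff distance between two orbit closures, and finally verify that the resulting metric induces the quotient topology. I would begin by noting that each $h \in H$ is an isometry with isometric inverse $h^{-1} \in H$, so each orbit closure $\overline{Hx}$ is $H$-invariant: $h(\overline{Hx}) = \overline{h(Hx)} = \overline{Hx}$. The key partition step is then: if $z \in \overline{Hx}$, choose $h_n \in H$ with $h_n x \to z$; the isometry identity $d(h_n^{-1} z, x) = d(z, h_n x) \to 0$ gives $h_n^{-1} z \to x$, whence $x \in \overline{Hz}$. $H$-invariance of closures upgrades this to $\overline{Hx} \subseteq \overline{Hz}$, and the reverse inclusion is immediate from $z \in \overline{Hx}$. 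Thus $\overline{Hx} = \overline{Hz}$; applying this to any $z \in \overline{Hx} \cap \overline{Hy}$ yields $\overline{Hx} = \overline{Hz} = \overline{Hy}$, so orbit closures partition $X$.

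Next I would compute the Hausdorff distance. Because $H$ acts isometrically, $d(hx, \overline{Hy}) = \inf_g d(hx, gy) = \inf_g d(x, h^{-1}g y) = d(x, \overline{Hy})$ is independent of $h$, so $\sup_{a \in \overline{Hx}} d(a, \overline{Hy}) = d(x, \overline{Hy})$. A symmetric computation gives $d(x, \overline{Hy}) = d(y, \overline{Hx})$, yielding the clean formula $d_H(C(x), C(y)) = \inf_{h \in H} d(x, hy)$. This is manifestly symmetric and nonnegative, and it satisfies the triangle inequality by a routine $\epsilon/2$ argument using isometry of each $h$. Finally, $d_H(C(x), C(y)) = 0$ forces $x \in \overline{Hy}$, which by the partition step forces $C(x) = C(y)$; hence $d_H$ is a genuine metric on the quotient.

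Finally I would verify that $d_H$ induces the quotient topology by showing $V$ is quotient-open iff $V$ is $d_H$-open. If $V$ is quotient-open and $C(x) \in V$, pick $\epsilon > 0$ with $B(x, \epsilon) \subset q^{-1}(V)$; then $d_H(C(x), C(y)) < \epsilon$ produces some $hy \in B(x, \epsilon)$, so $C(y) = C(hy) \in V$, showing $V$ is $d_H$-open. Conversely, if $V$ is $d_H$-open and $x \in q^{-1}(V)$, choose $\epsilon$ so the $d_H$-ball of radius $\epsilon$ about $C(x)$ lies in $V$; then any $y \in B(x, \epsilon) \subset X$ satisfies $d_H(C(x), C(y)) \leq d(x, y) < \epsilon$, so $C(y) \in V$, so $q^{-1}(V)$ is open. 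The only genuinely subtle step is the partition argument, where the isometry identity $d(h_n^{-1}z, x) = d(z, h_n x)$ is essential in order to conclude that $x$ lies in the closure of the orbit of $z$; without this symmetry one would be tempted to appeal to completeness or boundedness, neither of which the lemma permits. Everything beyond this reduces to straightforward manipulation exploiting that $H$ is a group acting by isometries.
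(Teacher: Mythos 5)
Your proof is correct and follows essentially the same route as the paper's: partition of orbit closures (using that isometries have isometric inverses), the formula $d_H(C(x),C(y))=\inf_{h\in H} d(x,hy)$ for the Hausdorff metric, and a verification that this metric induces the quotient topology. The only cosmetic difference is that the paper checks the topology agreement by observing that $q$ is a contraction and then lifting convergent sequences, whereas you compare open sets directly; you also spell out details (the Hausdorff-distance computation and the $d(h_n^{-1}z,x)=d(z,h_nx)$ step) that the paper dismisses as ``straightforward'' or ``symmetric.''
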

  
  \begin{proof}
    Given $x \in X$define $C (x) = \overline{\{ H (x) \}_{} .}$ To check the
    orbit closures are disjoint, given $y \in C (x)$ let $y = {limh}_n
    (x)$ for some sequence $\{ h_n \} \subset H$. Suppose $\epsilon > 0$ and
    $z \in H (y)$. Let $z = h (y)$, get $n$ so that $d (y, h_n (x)) <
    \epsilon$. Then $d (z, {hh}_n (x)) < \epsilon$. Thus $z \in C (x)$
    and hence $H (y) \subset C (x)$. Thus $C (y) \subset C (x) $since $C (x)$
    is closed. By a symmetric argument $C (x) \subset C (y)$ and thus $C (x) =
    C (y) .$ Thus the sets of the form $C (x)$ determine a partition of $X$
    into pairwise disjoint closed sets.
    
    Given orbit closures $C (x)$ and $C (y)$ let $\varepsilon$ denote inf $\{
    d (x, y) \}$ \ taken over all $z \in H (y)$. Define $D (C (x), C (y)) =
    \varepsilon$. It is straight forward to check this is the Hausdorff
    metric, and the canonical map $X \rightarrow X / \overline{H (X)}$ is a
    contraction. To check it's a quotient map. Suppose $A \subset X /
    \overline{H (X)}$ is not closed with $C (a_n) \rightarrow C (x)$ with $C
    (a_n) \in A$ and $C (x) \notin A$. Obtain $x_n \in C (a_n)$ with $x_n
    \rightarrow x$. Thus the preimage of $A$ is not closed in $X$.
    
    \
    
    \ 
  \end{proof}

\begin{lem}
  \label{ezretract}Suppose $X$ is a space and $r : X \rightarrow Y$ is a
  (possibly discontinous) retraction onto the subspace $Y$. Suppose $q : Y
  \rightarrow Z$ is a (continuous ) quotient map and $A \subset Z$ is not
  closed. Then $({qr})^{- 1} (A)$ is not closed in X.
\end{lem}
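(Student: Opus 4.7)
My plan is to reduce the statement to the standard quotient-map criterion applied to $q$, using that the retraction $r$ restricts to the identity on $Y$ to recover information about subsets of $Y$ from their preimages in $X$.

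First I set $B := q^{-1}(A) \subset Y$. Since $q$ is a quotient map and $A$ is not closed in $Z$, by the defining property of quotient maps $B$ is not closed in $Y$ (in the subspace topology). Observe also that $(qr)^{-1}(A) = r^{-1}(q^{-1}(A)) = r^{-1}(B)$, so it suffices to show $r^{-1}(B)$ is not closed in $X$.

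The key calculation is that $r^{-1}(B) \cap Y = B$. Indeed, for $y \in Y$ we have $r(y) = y$ since $r$ is a retraction onto $Y$, so $y \in r^{-1}(B)$ iff $y \in B$; and $B \subset Y$ to begin with. Now I argue by contradiction: if $r^{-1}(B)$ were closed in $X$, then intersecting with $Y$ would produce a set closed in the subspace topology on $Y$, namely $B$ itself, contradicting that $B$ is not closed in $Y$.

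I do not expect any real obstacle here; the only subtle point is that $r$ is not assumed continuous, but continuity of $r$ is never used, only the retraction identity $r|_Y = \mathrm{id}_Y$. The argument is entirely set-theoretic once the quotient-map criterion for $q$ and the subspace-topology criterion for closedness in $Y$ are invoked.
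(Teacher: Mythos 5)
Your proof is correct and follows essentially the same route as the paper: pull $A$ back to $B=q^{-1}(A)\subset Y$, use the quotient-map criterion to see $B$ is not closed in $Y$, and then transfer this to $X$. In fact your write-up is more careful than the paper's, which asserts "$B=(qr)^{-1}(A)$" where the correct identity is $(qr)^{-1}(A)\cap Y=B$ via $r|_Y=\mathrm{id}_Y$ --- exactly the step you supply.
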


\begin{proof}
  Pullback $A$ to $B = q^{- 1} (A) \subset Y.$ Note $B$ is not closed in $Y$
  since $q$ is a quotient map. Thus $B = ({qr})^{- 1} (A)$ is not closed
  in $X.$
\end{proof}

\begin{lem}
  \label{inject}Suppose the $p$ based loops $f_n$ and $g_n$ are path homotopic
  in the Hawaiian earring ${HE}$with $p$ the special point. Suppose $f_n
  \rightarrow f$ uniformly and $g_n \rightarrow g$ uniformly. Then $f$ and $g$
  are path homotopic.
  \end{lem}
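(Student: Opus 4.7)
The plan is to reduce the lemma to the $\pi_1$-shape injectivity of the Hawaiian earring $HE$ via the canonical retractions $\kappa_N : HE \to HE_N$ onto the bouquet of the first $N$ loops. Since the induced homomorphism $\pi_1(HE,p) \hookrightarrow \lim_{\leftarrow} F_N$ is injective, it suffices to show that $\kappa_N f$ and $\kappa_N g$ are path homotopic in $HE_N$ for every $N$.

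Fix $N$. Because $\kappa_N : HE \to HE_N$ is continuous, post-composition yields $\kappa_N f_n \simeq \kappa_N g_n$ in $HE_N$ for every $n$, and uniform convergence $f_n \to f$, $g_n \to g$ in $HE$ transfers to uniform convergence $\kappa_N f_n \to \kappa_N f$ and $\kappa_N g_n \to \kappa_N g$ in $HE_N$. The crux is then to show that, for $n$ large, uniform closeness in $HE_N$ forces path-homotopy equivalence.

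The key technical step is an openness claim: in the based loop space $L(S^1, 1, HE_N, p)$ with the uniform metric, each path-homotopy class is open. This is a standard consequence of local simple connectivity of $HE_N$. Given a loop $h$, cover its compact image by finitely many simply connected open sets of $HE_N$ (small arc neighborhoods away from $p$, and a star-shaped neighborhood of $p$ consisting of the meeting arcs, which is contractible), let $\delta$ be a Lebesgue number of the pullback cover on $S^1$, and partition $S^1$ so finely that both $h$ and any competitor $h'$ with $\sup_t d(h(t),h'(t))<\delta/2$ lie in the same patch on each subarc; then a piecewise homotopy built inside each simply connected patch, glued at the finitely many break points, exhibits $h \simeq h'$ rel $1 \in S^1$.

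Applying this with $h=\kappa_N f$ gives $\kappa_N f_n \simeq \kappa_N f$ for large $n$, and similarly $\kappa_N g_n \simeq \kappa_N g$; combining with $\kappa_N f_n \simeq \kappa_N g_n$ yields $\kappa_N f \simeq \kappa_N g$ in $HE_N$. As $N$ was arbitrary, shape injectivity of $HE$ delivers $f \simeq g$ in $HE$. The only non-routine part is verifying the openness of homotopy classes at the wedge point $p$, which is where the argument could superficially appear to fail; but the local structure of $HE_N$ at $p$ is a wedge of finitely many half-open arcs, hence contractible, so the standard Lebesgue-number argument applies without modification.
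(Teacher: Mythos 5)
Your proposal is correct and follows essentially the same route as the paper's proof: both reduce to the bouquets $HE_N$ via the retractions, use local contractibility of $HE_N$ to conclude that uniformly close loops there are path homotopic, and then invoke $\pi_1$-shape injectivity of $HE$. The only cosmetic difference is that the paper concatenates to form the inessential loops $f_n g_n^{-1}\rightarrow fg^{-1}$ while you treat $f$ and $g$ separately and spell out the Lebesgue-number argument for openness of homotopy classes.
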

  \begin{proof}
    Cancatanating with the reverse path, the inessential loops $f_n g_n^{- 1}
    \rightarrow {fg}^{^{- 1}}$ uniformly. Since the bouquet of $n$ loops
    ${HE}_n$ is locally contractible, ${fg}^{- 1}$ is inessential in
    ${HE}_n$ for each $n$ and hence, since ${HE}$ is $\pi_1$ shape
    injective ${fg}^{- 1}$ is inessential. See also a direct proof in
    {\cite{DeSmit}}

  \end{proof}

\begin{lem}
  \label{convcrit}Suppose $X_n$ is the discrete free monoid on letters $\{
  x_1, x_1^{- 1}, x_2, \ldots .x_n^{- 1} \}$ with empty identity. Suppose $R_n
  |X_{n + 1} \rightarrow X_n$ is the forgetful retraction, deleting all
  occurences of $\{ x_n, x_n^{- 1} \}$. Let $X_{\infty} = \lim_{\leftarrow}
  X_n$ . Identify $X_n$ with the subspace of $X_{\infty}$ comprised of all
  sequences eventually constant from index $n$ onward. Let $R_n : X_{\infty}
  \rightarrow X_n$ denote the canonical retraction. Let $q_n : X_n \rightarrow
  G_n$ denote the canonical quotient onto the free group $G_n$ on $n$ letters.
  Let $\sigma_n : G_n \rightarrow X_n$ denote the embedding mapping $g_n \in
  G_n$ to its maximally reduced representative. Let $q : X_{\infty}
  \rightarrow G$ denote the quotient map under the equivalence relation $u
  \sim v$ iff $q_n R_n (u) = q_n R_n (v)$ for all $n.$ By definition $q_n R_n$
  descends to the quotient inducing a map $\Pi_n : G \rightarrow G_n .$ \
  Claim 1. There is a well defined (discontinuous) injection $\sigma : G
  \rightarrow X_{\infty}$ with $\sigma (g) = \lim_{n \rightarrow \infty}$
  $\sigma_n \Pi_n (g)$ and $\sigma (g) \in g.$ Claim 2. The sequence $\{ g_n
  \}$ converges in the space $G$ iff for all $N$ the sequence $\Pi_N (g_n) $
  is eventually constant, and also if the sequence $\{ \sigma (g_n) \}$ has
  compact closure.
\end{lem}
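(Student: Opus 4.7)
The main task is to show that for each $g \in G$ and each $N$, the word $R_N(\sigma_n \Pi_n(g)) \in X_N$ is eventually constant as $n \to \infty$; since $X_\infty$ carries the pointwise (product) topology with each $X_N$ discrete, this will let us define $\sigma(g)$ coordinate-by-coordinate as an element of $X_\infty$. Abbreviate $v_n = \sigma_n \Pi_n(g)$, and let $v_{n+1}^{-}$ denote $v_{n+1}$ with every occurrence of $x_{n+1}^{\pm 1}$ deleted. The compatibility relation $r_n q_{n+1} = q_n R_n$ yields $q_n(v_{n+1}^{-}) = r_n \Pi_{n+1}(g) = \Pi_n(g)$, so $v_n$ is the free-monoid reduction of $v_{n+1}^{-}$. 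Since deleting $x_{n+1}^{\pm 1}$ commutes with deleting $x_k^{\pm 1}$ for $k > N$, we have $R_N(v_{n+1}^{-}) = R_N(v_{n+1})$.

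\textbf{The stabilization argument (the main obstacle).} Reducing $v_{n+1}^{-}$ to $v_n$ proceeds by canceling adjacent inverse pairs, and since $x_j$ and $x_j^{-1}$ share a common index, every cancellation removes two letters of a single index $j$. Cancellations with $j \le N$ strictly decrease $|R_N|$ by $2$, whereas those with $j > N$ do not affect it; hence $|R_N(v_n)| \le |R_N(v_{n+1})|$. On the other hand the low-indexed letters of $v_n$ are a subcollection of the low-indexed letters of $w_n$, so $|R_N(v_n)| \le |R_N(w_n)| = |w_N|$, a fixed upper bound. The integer sequence $|R_N(v_n)|$ is therefore non-decreasing and bounded, so it stabilizes; once the length is constant, no low-indexed cancellation can occur when passing from $v_{n+1}^{-}$ to $v_n$, and hence $R_N(v_n) = R_N(v_{n+1})$ as words. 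Declare $\sigma(g) = (u_N)_{N \ge 1}$ with $u_N \in X_N$ the eventual value of $R_N(v_n)$. Coherence $R_{N-1}(u_N) = u_{N-1}$ follows from $R_{N-1} \circ R_N = R_{N-1}$ on words, so $\sigma(g) \in X_\infty$. The identity $q_N R_N(v_n) = \Pi_N(g)$ passes to the limit to give $q_N(u_N) = \Pi_N(g)$ for every $N$, so $\sigma(g)$ is equivalent to any representative $w \in g$, confirming $\sigma(g) \in g$. Injectivity of $\sigma$ then follows from $q \circ \sigma = \mathrm{id}_G$.

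\textbf{Plan for Claim 2.} The direction that actually gets used in Corollary \ref{he} is sufficiency, and this is what I would establish carefully. Assume both that $\Pi_N(g_n)$ is eventually constant for every $N$ and that $\{\sigma(g_n)\}$ has compact closure in $X_\infty$. Compact closure provides, for every subsequence of $\{\sigma(g_n)\}$, a further subsequence converging to some $y \in X_\infty$; continuity of $q$ then gives the corresponding subsubsequence of $\{g_n\}$ converging to $q(y) \in G$. For any two subsubsequential limits $y, y' \in X_\infty$ and any $N$, $q_N R_N(y) = \lim q_N R_N(\sigma(g_n))$ along the relevant subsubsequence equals the eventual value of $\Pi_N(g_n)$ by hypothesis, and the same holds for $y'$; thus $q_N R_N(y) = q_N R_N(y')$ for every $N$, giving $y \sim y'$ and $q(y) = q(y')$. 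All subsubsequential limits of $\{g_n\}$ in $G$ therefore coincide, and combined with the compact closure and the Hausdorffness of $G$ (Remark \ref{gt2}), this forces the full sequence $\{g_n\}$ to converge in $G$ to the common value. The converse implication that convergence in $G$ forces $\Pi_N(g_n)$ to be eventually constant is immediate from continuity of $\Pi_N$ into the discrete space $G_N$.
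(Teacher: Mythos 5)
Your Claim~1 argument is correct and is essentially the paper's argument in a cleaner package: the paper also bounds the number of index-$\le N$ letters of $\sigma_n\Pi_n(g)$ by the corresponding count in a fixed representative $x\in g$ (via the observation that a reduced word's letters are a sub-collection of any unreduced representative's letters) and derives a contradiction from that count tending to infinity at a minimal bad index $N$; you reorganize this as a non-decreasing, bounded integer sequence $\abs{R_N(v_n)}$ that must stabilize, after which no low-index cancellation can occur and the low-index subword itself is frozen. That is a genuinely tidier route to the same conclusion, and your injectivity via $q\sigma=\mathrm{id}_G$ is simpler than the paper's direct comparison of $R_N(\sigma(g))$ and $R_N(\sigma(h))$. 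Your sufficiency argument for Claim~2 (all subsequential limits $y$ of $\{\sigma(g_n)\}$ in the sequentially compact closure satisfy $q_NR_N(y)=$ the eventual value of $\Pi_N(g_n)$, hence have the same image under $q$, hence the full sequence converges) is also the paper's argument. The parenthetical discontinuity of $\sigma$ is not addressed, but that is cosmetic; the paper disposes of it with the example $\sigma(x_1x_nx_1^{-1})\rightarrow(x_1x_1^{-1},x_1x_1^{-1},\ldots)\neq\sigma(\emptyset)$.

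The genuine gap is that Claim~2 is stated as an equivalence and you prove only one implication in full: you never show that convergence of $\{g_n\}$ in $G$ forces $\{\sigma(g_n)\}$ to have compact closure. You flag the omission, but this is in fact the subtlest part of the paper's proof and it cannot be dropped from a proof of the lemma as stated. To fill it, fix $N$ and show $\{R_N(\sigma(g_n))\}$ is a finite subset of the discrete space $X_N$ (by induction on $N$ this gives sequential compactness of the closure in the metrizable space $X_\infty$). If it were infinite, one could pass to a subsequence along which $R_{N-1}(\sigma(g_{n_k}))$ is constant while the words $R_N(\sigma(g_{n_k}))$ are distinct, forcing $c(R_N(\sigma(g_{n_k})),N)\rightarrow\infty$, where $c(\cdot,N)$ counts occurrences of $x_N^{\pm1}$. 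On the other hand, because $q$ is a quotient map and $X_\infty$ is metrizable, every convergent sequence in $G$ has a subsequence that lifts to a convergent sequence in $X_\infty$; a convergent lift has $c(\cdot,N)$ bounded, and the inequality $c(x,N)\ge c(\sigma q(x),N)$ --- which follows from exactly the ``reduced word is a sub-collection of the letters'' observation you already use in Claim~1 --- transfers that bound to $\sigma(g_{n_k})$, a contradiction. With that paragraph added your proof would be complete.
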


\begin{proof}
  We have a canonical partial order on $X_{\infty}$ defined as follows. Given
  $w = (w_1, w_2, \ldots .) \in X_{\infty}$ let $T (w_1, w_2, \ldots) = (N_1
  (w), N_2 (w), \ldots)$ with $N_k (w) \geq 0$ the combined number of
  occurences of $\{ x_k, x_k^{- 1} \}$ in $w_k .$ The function $T$ determines
  a partial lexigraphic order on $X_{\infty}$ with $T (v) < T (w)$ if $N_k (v)
  < N_k (w)$ with $k$ minimal so that $N_k (v) \neq N_k (w)$.
  
  Given $x \in X_{\infty}$ let $c (x, N)$ denote the total number of
  occurences of $\{ x_N, x_N^{- 1} \}$ in the word $R_N (x) .$ Define $\phi :
  G \rightarrow G_1 \times G_2 \ldots$ via $\phi (g) = \Pi_1 (g), \Pi_2 (g)
  \ldots$ and note $\phi$ is one to one. Thus $G$ is $T_2$ since the codomain
  is $T_2 .$ Consequently convergent sequences in $G$ have unique limits.
  
  Proof of Claim 1. Note for each $g_n \in G_n$ the corresponding subset $g_n
  \subset X$ has a unique minimal element $\sigma_n (g_n) \in X_n .$ To obtain
  a contradiction suppose $g \in G$ and $N \in \{ 1, 2, 3, \ldots \}$ is
  minimal so that $\{ R_N (\sigma_n \Pi_n (g)) \}$ is not eventually constant.
  Obtain $x \in g.$ Obtain $M$ so that if $M \leq n$ and $1 \leq k
  \leq N - 1$ then $\{ R_{N - k} (\sigma_n \Pi_n (g)) \}$ is constant.
  Thus for each $n \geq M$ we have $R_N (\sigma_n \Pi_n (g))$<$R_N
  (\sigma_{n + 1} \Pi_{n + 1} (g))$ or $R_N (\sigma_n \Pi_n (g))$=$R_N
  (\sigma_{n + 1} \Pi_{n + 1} (g))$. Thus, if $R_N (\sigma_n \Pi_n (g))$ is
  not eventually constant we have $c (R_N (\sigma_n \Pi_n (g)), N) \rightarrow
  \infty$, contradicting the fact that $c (x, N) \geq c (R_N (\sigma_n
  \Pi_n (g)), N)$ for all $n$.
  
  By definition if $g \in G$ and $x \in g$ then $R_N (x)$ and $R_N (\sigma
  (g)) $are $q_N $equivalent and hence $g \in \sigma (g) .$ To see that
  $\sigma$ is one to one, note if $\{ g, h \} \subset G$ with $g \neq h$, get
  $N$ minimal so that $\Pi_N (g) \neq \Pi_N (h)$, and note $\sigma_n (\Pi_N
  (g)) \neq \sigma_n (\Pi_N (h))$. \ Note, in $X_N$, $R_N (\sigma (g))$
  reduces to $\sigma_n (\Pi_N (g))$ and $R_N (\sigma (h))$ reduces to
  $\sigma_n (\Pi_N (h))$. 
Thus $R_N (\sigma (g)) \neq R_N (\sigma
  (h))$ and hence $\sigma (h) \neq \sigma (g) .$ Note $x_1 x_n x_1^{- 1}
  \rightarrow \emptyset \in G$and $\sigma (\emptyset) = \emptyset$.
  However $\sigma (x_1 x_n x_1^{- 1}) \rightarrow (x_1 x_1^{- 1}, x_1 x_1^{-
  1}, \ldots .) \neq \emptyset .$ Thus $\sigma$ is not continuous. This
  proves claim 1.
  
  To prove claim 2 suppose $\{ g_n \}$ is a convergent sequence in $G$. Since
  $\Pi_n$ is continuous and $G_N$ is discrete, the sequence $\Pi_N (g_n) $ is
  eventually constant. Since $X_{\infty}$ is metrizable to prove $\{ \sigma
  (g_n) \}$ has compact closure it suffices to prove $\overline{\{ \sigma
  (g_n) \}}$ is sequentially compact. Thus it suffices to prove, for each $N$,
  and allowing $n$ to vary, the set $\{ R_N (\sigma g_n) \}$ is finite. Fix
  $N$. To seek a contradiction, if $\{ R_N (\sigma g_n) \}$ were infinite then
  some subsequence is comprised of infinitely many distinct terms in the
  discrete space $X_N .$ Wolog we assume $\{ R_N (\sigma g_n) \}$ itself is
  comprised of infinitely many distinct terms. Note $c (R_N (\sigma g_n), N)
  \rightarrow \infty$. On the other hand since $q$ is a quotient map, some
  subsequence of $\{ g_n \}$ lifts to a convergent sequence in $X_{\infty} .$
  Thus $c (\ast, N)$ of the subsequential lifts is bounded. This contradicts
  the general fact that if $q (x) = g$ then $c (x, N) \geq c (\sigma (g),
  N),$ the latter inequality argued as follows. If we let $c (x, N, M)$ denote
  the total number of occurences of $\{ x_N, x_N^{- 1} \} $in $R_M (x),$ then
  by definition of $X_{\infty}$ we have $c (x, N, M) = 0$ if $M < N$ and $c
  (x, N, M) = c (x, N)$ if $N \leq M$. By definition of $\sigma$ obtain
  $M \geq N$ so that $R_N (\sigma_M (q_M (R_M x))) = R_N (\sigma q (x))$.
  Thus, $c (x, N) = c (x, N, M) \geq c (\sigma_M (q_M (R_M x)), N, M) = c
  (\sigma q (x), N, M) = c (\sigma q (x), N) .$
  
  For the converse of claim 2, suppose $\{ g_n \}$ is a sequence in $G$ such
  that $\{ \Pi_N (g_n) \}$ converges for each $N$ and such that $\{ \sigma
  (g_n) \}$ has compact closure. Since $X_{\infty}$ is metrizable,
  $\overline{\{ \sigma (g_n) \}}$ is sequentially compact. Let $x \in
  X_{\infty} $and $y \in X_{\infty}$ be subsequential limits of $\{ \sigma
  (g_n) \}$. With subsequences $v_n \rightarrow x$ and $w_n \rightarrow y.$It
  suffices to prove $q (x) = q (y)$, i.e. to prove $\Pi_N (q (x)) = \Pi_N (q
  (y))$ for all $N.$ Exploiting our hypothesis, continuity of $R_N$, and the
  fact that $X_N $is a discrete space, obtain $M$ so that $\Pi_N (q (x)) =
  q_{_{} N} (R_N x) = q_N (R_N (v_M)) = \Pi_N (g_M) = q_N (R_N (w_M)) = q_N
  (R_N (y)) = \Pi_N (q (y))$. This proves claim 2.

\end{proof}


\begin{thebibliography}{99}
 
  
  \bibitem{A1}A.V. Arhangel'skii, A. Bella, Countable fan-tightness versus
  countable tightness, Comment. Math. Univ. Carolin. 37 (1996) 565--576.
  
  \bibitem{AT1}Arhangel'skii, A., Tkachenko, M.: {{Topological Groups
  and Related Structures}}, Atlantis Studies in Mathematics, vol. I. Paris;
  World Scientific Publishing Co., Pte. Ltd., Hackensack, NJ, Atlantis Press
  (2008)
  
  
  
  \bibitem{Ba0}T. Banakh, {{On topological groups containing a
  Fr{\'e}chet-Urysohn fan}}, Mat. Stud. 9 (2) (1998) 149--154.
  
  












  
  
  \bibitem{Bog}W.A. Bogley, A.J. Sieradski, {{Universal path spaces}},
  preprint; http://oregonstate.edu/\~{}bogleyw/.
  
  \bibitem{BT}P. Booth, J. Tillotson, {{Monoidal closed, Cartesian
  closed and convenient categories of topological spaces}}, Pacific J. Math.
  88 (1980), no. 1, 35--53.
  
 
  
  \bibitem{B}J. Brazas, {{The fundamental group as a topological
  group}}, Topol. Appl. 160 (2013) 170--188.
  
  \bibitem{BF}J. Brazas, P. Fabel, On fundamental groups with the quotient
  topology, J. Homotopy Relat. Struct. 10 (2015) 71--91.
  
 
  
  \bibitem{B1}J. Brazas, On the discontinuity of the $\pi_1$
  action. Topology Appl. 231 (2017), 186-196.
  
 
  
  \bibitem{BR1}R. Brown, {{Ten topologies for }}$X \times Y$\quad, Q.
  J. Math. Oxf. 14 (1963) 303--319
  
  \bibitem{CC1}Cannon, J.W., Conner, G.R., The combinatorial structure of
  the Hawaiian earring group, Topology and its Applications 106 (2000),
  225--271.
  
  \bibitem{CE}G.R. Conner, K. Eda, {{Fundamental groups having the
  whole information of spaces}}, Topology Appl. 146 (147) (2005) 317--328.
  
  \bibitem{ChE}E. Cheng, {{x+y, A Mathematician's Manifesto for
  Rethinking Gender}}, June 2020, New York, NY, Basic Books.
  

  
  \bibitem{CK1}Conner, G.R., Kent, C.: {{Fundamental groups of locally
  connected subsets of the plane.}} Adv. Math. 347, 384--407 (2019)
  
  \bibitem{Cors1}Corson, S., {{The number of homomorphisms from the
  Hawaiian earring group.}} J. Algebra 523, 34--52 (2019)
  
  \bibitem{DeSmit}B. de Smit, {{The fundamental group of the Hawaiian
  earring is not free}}, Internat. J. Algebra Comput. 2 (1) (1992)
  

  
  \bibitem{D1}J. J. Dijkstra and J. van Mill, {{Erd{\H o}s space and
  homeomorphism groups of manifolds}}, Mem. Amer. Math. Soc. 208 (2010), no.
  979, vi+62.
  
  \bibitem{E0}K. Eda, {{Free subgroups of the fundamental group of the
  Hawaiian earring}}, J. Algebra 219 (1999), 598--605.
  

  
  \bibitem{E1}K. Eda, {{Atomic property of the fundamental groups of
  the Hawaiian earring and wild locally path-connected spaces}}, J. Math. Soc.
  Japan 63 (2011) 769--787.
  
  \bibitem{FA0}P. Fabel, {{The topological Hawaiian earring group does
  not embed in the inverse limit of free groups,}} Algebr. Geom. Topol. 5
  (2005), 1585--1587.
  

  
  \bibitem{FA1}P. Fabel, {{Multiplication is discontinuous in the
  Hawaiian earring group (with the quotient topology)}}, Bull. Pol. Acad.
  Sci., Math. 59 (1) (2011) 77--83.
  
  \bibitem{FA2}P. Fabel, {{Compactly generated quasitopological
  homotopy groups with discontinuous multiplication}}, Topol. Proc. 40 (2012)
  303--309.
  

  
  \bibitem{FZ}H. Fischer, A. Zastrow, {{Generalized universal covering
  spaces and the shape group}}, Fundam. Math. 197 (2007){\v Z}.
  
  \bibitem{FZ2}H. Fischer, A. Zastrow, {{A core-free semicovering of
  the Hawaiian Earring}}, Topology Appl. 160 (14) (2013) 1957--1967.
  
  \bibitem{MM}J.W. Morgan, I. Morrison, {{A Van Kampen theorem for weak
  joins}}, Proc. London Math. Soc. 53 (3) (1986) 562--576.
  
  \bibitem{Po}H. Porst, {{On the existence and structure of free
  topological groups}}, in: Category Theory at Work, 1991, pp. 165--176.
  

  
  \bibitem{VZ2}Z. Virk, A. Zastrow, {{The comparison of topologies
  related to various concepts of generalized covering spaces}}, Topol. Appl.
  170 (2014) 52--62.
  
  \bibitem{VZ3}Z. Virk, A. Zastrow, {{A new topology on the universal
  path space}}, Topol. Appl. 231 (2017) 186--196.
  
 
\end{thebibliography}
\end{document}